\documentclass[11pt]{article}

\usepackage{pdfpages}
\usepackage{array}
\usepackage{amsmath}
\usepackage{amssymb}
\usepackage{pstricks}
\usepackage{amsfonts}
\usepackage{latexsym}
\usepackage{algorithm}
 \usepackage{hyperref}
\usepackage[noend]{algorithmic}
\usepackage{todonotes}

\newcommand{\E}{\mathbb{E}}
\newcommand{\R}{\mathbb{R}}
\newcommand{\N}{\mathbb{N}}

\newcommand{\D}{\mathcal{D}}

\newcommand{\ac}{\`{a} }

\newtheorem{theorem}{Theorem}
\newtheorem{proposition}{Proposition}
\newtheorem{lemma}{Lemma}
\newtheorem{corollary}{Corollary}

\newtheorem{remark}{Remark}

\def\fdimo{~\hfill$\Box$\medbreak}

\begin{document}
	
\title{Analysis of multivariate symbol statistics\\
	in primitive rational models}

\author{Massimiliano Goldwurm$^{(1)}$, Claudio Macci$^{(2)}$, Marco Vignati$^{(1)}$, Elena Villa$^{(1)}$}

\date{ }
\maketitle

\begin{center}
{\small (1) Dipartimento di Matematica, Universit\ac degli Studi di Milano, Italy\\
	\small (2) Dipartimento di Matematica, Universit\ac di Roma Tor Vergata, Italy}
\end{center}

\date{ }
\maketitle

\begin{abstract}
We study the asymptotic behaviour of sequences of multivariate random variables
representing the number of occurrences of a given set of symbols in a word of length $n$ 
generated at random according to a rational stochastic model.
Assuming primitive the matrix of the total weights of transitions of the model,
we first determine asymptotic expressions for the mean values and the covariances of such statistics.
Then we establish two asymptotic results that generalize known univariate cases to different regimes:
a large deviation principle with speed $n$, implying almost sure convergence, and a multivariate 
Gaussian limit. Additionally, we introduce a novel moderate deviation result as a bridge between these
regimes. Central to our proofs is a quasi-power property for the moment generating function of the statistics, allowing us to employ the Gärtner-Ellis Theorem for both large and moderate deviations.
\end{abstract}

\noindent
{\bf Keywords}: large deviations, limit distributions, pattern statistics, regular languages.

\medskip
\noindent
{\bf AMS Classification}: 60F10, 68Q45, 68Q87.

\section{Introduction}
\label{sec:introd}

This work is devoted to the analysis of sequences of multivariate random variables,
representing the number of occurrences of symbols, belonging to a given finite
alphabet $\Sigma$, in a word of length $n$ in $\Sigma^*$
(\footnote{Traditionally, $\Sigma^*$ is the set of all words over $\Sigma$.}), 
generated at random according to a rational stochastic model.
This model was introduced in \cite{bcgl03} to study the number of occurrences of patterns in words of given length in a regular language.
It can be defined by a (nondeterministic) finite state automaton,
equipped with positive real weights
on transitions (called \emph{generalized automaton} in \cite{tu69}). 
For any $n\in\N_+$, the probability of a word $w\in \Sigma^*$ of length $n$ is here
proportional to the total weight of the accepting computations of the automaton
on input $w$.
These stochastic models include as special case the uniform random generation of words
of given length in any regular language.
They also include other classical probabilistic models used in the analysis of
pattern statistics to generate at random a text of given length, as the Bernoullian or
the Markovian models \cite{nsf02,rs98,fsv06,js15}.

The rational model in the univariate case, when $\Sigma$ is reduced to a
binary alphabet, has been studied in several previous papers and various properties
have been obtained for the number of occurrences of a single symbol in a random
word of given length.
In particular, when the matrix of the weights of the transitions is primitive, 
such a quantity has a Gaussian limit distribution, it owns various properties of
local limit and enjoys a large deviations principle \cite{bcgl03,dgl04,glv23,gv25}.

The main goal of the present work is to extend some of those results
to the multivariate case, when the alphabet $\Sigma$ contains $\ell +1$ elements
and the random variable under investigation has $\ell > 1$ components.
Here in particular we are interested in the large deviation properties recently 
studied in \cite{gv25,gv25a} in the univariate case.
We recall that the properties of large deviations are a classical subject of probability
\cite{dz10,dh00}, often studied in the context of pattern statistics \cite{br14,dr04},
and more generally in the analysis of combinatorial structures \cite{hw96,fs09}.

To present our results, let us denote by ${\bf Y}_n$ the random variable 
of $\ell$ components,
representing the number of occurrences of the first $\ell$ elements of $\Sigma$ in
a word of length $n$ generated at random according to a rational stochastic model.
Assuming primitive the matrix of the weights of the automaton that defines the model,
we determine asymptotic expressions for the mean values and the covariance matrix of
$Y_n$.
In particular we have $\E[{\bf Y}_n] = \boldsymbol{\beta} n + o(n)$ for some
$\boldsymbol{\beta} \in \R^{\ell}$.
Moreover, under the same primitivity hypothesis, we prove the following results:
\begin{itemize}
	\item $\frac{{\bf Y}_n - \boldsymbol{\beta}n}{\sqrt{n}}
	=\sqrt{n}\left(\frac{{\bf Y}_n}{n}-\boldsymbol{\beta}\right)$ converges in distribution to a 
	centered Gaussian random variable;
	\item $\{{\bf Y}_n/n\}$ satisfies a Large Deviation Principle with speed $n$, implying that the probabilities
	$P(\| {\bf Y}_n/n - \boldsymbol{\beta}\| \geq \delta)$ decay exponentially to zero (for every $\delta>0$), and
	therefore ensuring the almost sure convergence of ${\bf Y}_n/n$ to $\boldsymbol{\beta}$;
	\item a moderate deviation result holds, providing a bridge between the two aforementioned asymptotic regimes.
\end{itemize}
We note that, while the first two findings are consistent with known results for the univariate case $\ell=1$ 
\cite{bcgl03,gv25}, the moderate deviation result represents a entirely new contribution.
Both large and moderate deviations are established by applying the G\"{a}rtner-Ellis Theorem, made possible by the 
quasi-power property of the moment generating function of ${\bf Y}_n$.

The material we present is organized as follows: in Section \ref{sec:prelim} we recall standard notions
and results concerning the large deviations; in Section \ref{sec:mratmod} we define the rational model
in our context and introduce the random variable ${\bf Y}_n$;
in Section \ref{sec:primitive} we give some properties of ${\bf Y}_n$ in the primitive case,
in particular we show a quasi-power theorem for its moment generating function, and
prove the asymptotic expression of its mean values and covariance matrix.
The Gaussian limit distribution for $\frac{{\bf Y}_n - \boldsymbol{\beta}n}{\sqrt{n}}$ is presented in
Section \ref{sec:normalap}.
The other two results mentioned above are then proved in Sections \ref{sec:largedev} and
\ref{sec:moderatedev}, respectively.
Section \ref{sec:conclusioni} summarizes the results with some further comments and discusses possible open problems
while, in the final appendix (Section \ref{sec:appendice}), we recall some known properties that may be useful for the reader.

\section{Preliminaries on large (and moderate) deviations}
\label{sec:prelim}

In this section we recall some preliminaries on large (and moderate) deviations, 
and we refer to sequences of random variables 
taking values in $\R^m$, for any integer $m\geq 1$.
The main references are \cite{dz10,dh00}.

We start with some preliminary notation.
For any pair of elements ${\bf x}=(x_1,\ldots,x_m), {\bf y}=(y_1,\ldots,y_m) \in \R^m$,
we denote by ${\bf x}\cdot {\bf y}$ the scalar product $\sum_{i=1}^n x_i y_i$.
When necessary, an element ${\bf x} \in \R^m$ is considered as a column array, 
and ${\bf x}'$ denotes its transposed (row) array:
thus, ${\bf x}\cdot {\bf y}$ can also be represented by ${\bf x}'{\bf y}$.
For every set $D\subset \R^m$, we denote by $D^c$, $D^\circ$ and $\overline{D}$, respectively,
the complement, the interior and the closure of $D$.
Moreover, for any function $f:\R^m \rightarrow [-\infty, \infty]$, we set 
$\D_f:=\{ {\bf x}\in\R^m : f({\bf x}) \in \R \}$,
$\nabla f({\bf x})$ represents the gradient of $f$ at ${\bf x}\in\R^m$,
and $H f({\bf x})$ is the Hessian matrix of $f$ at ${\bf x}\in\R^m$.
At last, for every $\delta\geq 0$ and every ${\bf x_0} \in \R^m$,
$B_\delta({\bf x_0}) := \{{\bf x} \in \R^m : \|{\bf x} - {\bf x_0}\| < \delta\}$ 
is the open ball centered in $\bf{x}_0$ having radius $\delta$.

A \emph{rate function} is defined as a lower semi-continuous map $I: \R^m \rightarrow [0,\infty]$;
such a function is said to be \emph{good} if, for any $c\geq 0$,
the level set $\{{\bf x}\in\R^m:I({\bf x})\leq c\}$ is compact.
We recall that every lower semi-continuous function always attains a minimum value in any nonempty 
compact set.

Now, consider a sequence $\{v_n\}_{n\geq 1}$ of positive reals such that $\lim_{n\to\infty}v_n=\infty$.
We say that a sequence of $\R^m$-valued random variables
$\{{\bf X}_n\}_{n\geq 1}$, defined on the same probability space $(\Omega,\mathcal{F},P)$, satisfies the Large Deviation Principle (LDP for short)
with speed $v_n$ and rate function $I$, if the following relations hold:
\begin{gather}
\label{ldpaperti}
\liminf_{n\to\infty}\frac{1}{v_n}\log P({\bf X}_n\in O)\geq -\inf_{{\bf x}\in O}I({\bf x}) \ ,  \qquad
\mbox{ for all open sets $O\subseteq \R^m$;}\\
\label{ldpchiusi}
\limsup_{n\to\infty}\frac{1}{v_n}\log P({\bf X}_n\in C)\leq-\inf_{{\bf x}\in C}I({\bf x}) \ , \qquad 
\mbox{ for all closed sets $C\subseteq \R^m$.}
\end{gather}
An equivalent formulation of \eqref{ldpaperti} and \eqref{ldpchiusi} is the following:
\begin{equation} -\inf_{{\bf x}\in D^\circ}I({\bf x})\leq \liminf_{n\to\infty}\frac{1}{v_n}\log P({\bf X}_n\in D)\leq \limsup_{n\to\infty}\frac{1}{v_n}\log P({\bf X}_n\in D)\leq -\inf_{{\bf x}\in \overline{D}}I({\bf x})
\label{LDP}
\end{equation}
for all $D\subset\R^m$.

From the definition of LDP given above one can deduce some results and simple observations that are useful in our context:
\begin{enumerate}
 \item If $\inf_{{\bf x}\in D^\circ}I({\bf x})=\inf_{{\bf x}\in \overline{D}}I({\bf x})$ for some $D\subset\R^m$, then 
 $$\lim_{n\to\infty}\frac{1}{v_n}\log P({\bf X}_n\in D)=-\inf_{{\bf x}\in D}I({\bf x});$$
 so, roughly speaking, $P({\bf X}_n\in D) \approx e^{-v_n\inf_{x\in D} I({\bf x})}$ (and we have an exponential decay to zero if
 $\inf_{x\in D} I({\bf x})>0$). Then, the greater the value of  $\inf_{x\in D} I({\bf x})$, the less likely the event $\{{\bf X}_n\in D \}$ 
 is to occur when $n$ is large. In particular, when $P({\bf X}_n\in D)=0$ for every $n$, then $\inf_{x\in D} I({\bf x})=+\infty$.
 \item 
 If the random variables  ${\bf X}_n$ take values in a closed set $C_\bullet$, that is $P({\bf X}_n\in C_\bullet)=1$ for every $n$,
 then the rate function $I({\bf x})$ is equal to infinity on the complement  $C_\bullet^c$.
 \item 
 It is possible to prove (see Section \ref{app convergenza}) that,
 if $I$ is a good rate function, and there exists ${\bf x}_0$ such that $I({\bf x})=0$ if and only if 
 ${\bf x}={\bf x}_0$, then the sequence $\{{\bf X}_n\}_{n\geq 1}$ converges in probability to ${\bf x}_0$. 
 Moreover if $v_n=n$ the convergence is almost sure.
 \end{enumerate}
 
In our work, the sequence $ \{\mathbf{X}_n\}_{n\geq 1}$ will be of the form  
$ \{\mathbf{Y}_n/n\}_{n\geq 1}$, where
$\mathbf{Y}_n=(Y_{n,1}, \ldots, Y_{n,m})$, each component $Y_{n,i}$
takes values in $ \{0,1,\ldots,n\} $, and we have
$Y_{n,1} + \cdots + Y_{n,m} \leq n$.
Therefore, the set $ C_\bullet $ mentioned above is the traditional simplex, i.e.
$$C_\bullet=\{(x_1,\ldots,x_m)\in\mathbb{R}^m:x_1,\ldots,x_m\geq 0,x_1+\cdots+x_m\leq 1\}.$$
We will show that the sequence $ \{\mathbf{X}_n\}_{n\geq 1}$ satisfies a LDP with speed $v_n=n$. It will also follow that the corresponding rate function vanishes only at a single point $\mathbf{x}_0$, which will be specified and denoted by 
$\boldsymbol{\beta}$.

We will also present a result on \emph{moderate deviations}.
This terminology is used in the literature to refer to a suitable class of large deviation principles
   which bridges the gap between two asymptotic regimes:
   \begin{itemize}
   	\item the convergence of $\{{\bf X}_n-{\bf x}_0\}_{n\geq 1}$ to zero;
   	\item the weak convergence of $\{\sqrt{n}({\bf X}_n-{\bf x}_0)\}_{n\geq 1} $ to the centered Normal distribution with a suitable 
   	covariance matrix $V$.
   \end{itemize}
   More precisely, we mean that, for every positive sequence $\{a_n\}_{n\geq 1}$ such that $a_n\to 0$ and 
   $na_n\to\infty$, we will establish an LDP for the sequence $\{\sqrt{na_n}({\bf X}_n-{\bf x}_0)\}_{n\geq 1}$ 
   with speed $1/a_n$ and an appropriate good rate function $J$. So, in some sense, the two asymptotic regimes
   above correspond to $a_n=1/n$ (in this case $na_n\to\infty$ fails) and $a_n=1$ (in this case $a_n\to 0$ fails),
   respectively.

The large deviation result for $ \{\mathbf{X}_n\}_{n\geq 1}$ with speed $n$,
as well as the moderate deviation result, will be obtained through suitable applications of the 
\emph{G\"{a}rtner–Ellis theorem} (see e.g. \cite[Th. 2.3.6]{dz10}), whose statement is recalled below.
To this end, we first recall that a convex function $f:\R^m\to(-\infty,\infty]$  is said to be
\emph{essentially smooth} if the interior $\mathcal{D}_f^\circ$ is
non-empty, $f$ is differentiable throughout
$\mathcal{D}_f^\circ$, and $f$ is steep, i.e.
$\lim_{n\to\infty}\|\nabla f({\bf x}_n)\|=\infty$ whenever
$\{{\bf x}_n\}_{n\geq 1}$ is a sequence in $\mathcal{D}_f^\circ$
converging to some point on the boundary of $\mathcal{D}_f$.

\begin{theorem}[G\"{a}rtner-Ellis Theorem]
\label{GET}
Let $\{{\bf Z}_n\}_{n\geq 1}$ be a sequence of $\mathbb{R}^m$-valued
random variables such that there exists the function
$\Lambda:\R^m\to(-\infty,\infty]$ defined by
$$\Lambda({\bf t}):=\lim_{n\to\infty}\frac{1}{v_n}\log\mathbb{E}\left[e^{v_n\;{{\bf t}}\cdot{\bf Z}_n}\right]\ ,
\qquad \forall\ {\bf t}\in\R^m.$$ 
Assume that the origin ${\bf 0}=(0,\ldots,0)\in\R^m$ belongs to the interior
$\mathcal{D}_\Lambda^\circ$ and also that 
 $\Lambda$ is essentially smooth and lower semi-continuous.
Then $\{{\bf Z}_n\}_{n\geq 1}$ satisfies the LDP with speed $v_n$ and good
rate function $\Lambda^*$ defined by
$$\Lambda^*({\bf x}):=\sup_{{\bf t}\in\mathbb{R}^m}\{{\bf t}\cdot{\bf x}-\Lambda({\bf t})\}.$$
\end{theorem}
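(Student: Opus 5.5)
The plan is to follow the classical route of \cite[Sec.~2.3]{dz10}: prove the upper bound \eqref{ldpchiusi} first for compact sets, then extend it to closed sets via exponential tightness, and finally prove the lower bound \eqref{ldpaperti} by an exponential change of measure at the \emph{exposed points} of $\Lambda^*$. Some preliminary facts come for free. $\Lambda$ is convex, as a pointwise limit of the convex functions $\frac{1}{v_n}\log\E[e^{v_n {\bf t}\cdot {\bf Z}_n}]$ (H\"older), and $\Lambda({\bf 0})=0$. Hence $\Lambda^*$ is a convex, lower semi-continuous (as a supremum of affine functions), nonnegative function, so it is a rate function. Goodness will follow once we know that ${\bf 0}\in\D_\Lambda^\circ$, i.e.\ that $\Lambda$ is finite on a ball $B_\rho({\bf 0})$. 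Indeed, then $\Lambda^*({\bf x})\ge \rho\|{\bf x}\|-\sup_{\|{\bf t}\|\le\rho}\Lambda({\bf t})$, and the supremum is finite by convexity. So the level sets are closed and bounded.

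\textbf{Upper bound.} For a compact $\Gamma$ and $\delta>0$, pick for each ${\bf x}\in\Gamma$ a vector ${\bf t}_{\bf x}$ with ${\bf t}_{\bf x}\cdot{\bf x}-\Lambda({\bf t}_{\bf x})\ge \min\{\Lambda^*({\bf x})-\delta,1/\delta\}$. Choose a ball around ${\bf x}$ of radius $r_{\bf x}$ with $r_{\bf x}\|{\bf t}_{\bf x}\|\le\delta$. By Chebyshev's exponential inequality,
\[
P({\bf Z}_n\in B_{r_{\bf x}}({\bf x}))\le \E\big[e^{v_n{\bf t}_{\bf x}\cdot{\bf Z}_n}\big]\,e^{-v_n\inf_{{\bf y}\in B_{r_{\bf x}}({\bf x})}{\bf t}_{\bf x}\cdot{\bf y}} .
\]
This gives $\limsup\frac1{v_n}\log P(\cdot)\le \delta+\Lambda({\bf t}_{\bf x})-{\bf t}_{\bf x}\cdot{\bf x}$. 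A finite subcover of $\Gamma$, together with the principle of the largest term, yields the bound for $\Gamma$. To pass to closed sets, I would prove exponential tightness. Applying the same Chebyshev inequality with ${\bf t}=\pm\rho\,{\bf e}_j$ to each coordinate gives $\limsup\frac1{v_n}\log P(|Z_{n,j}|>M)\le \max_\pm\Lambda(\pm\rho{\bf e}_j)-\rho M\to-\infty$ as $M\to\infty$, and this is exactly where ${\bf 0}\in\D_\Lambda^\circ$ is used.

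\textbf{Lower bound.} It suffices to show that for every ${\bf x}$ and every $\delta>0$, $\liminf\frac1{v_n}\log P({\bf Z}_n\in B_\delta({\bf x}))\ge-\Lambda^*({\bf x})$. I would first do this for exposed points ${\bf y}$ whose exposing hyperplane ${\bf \eta}$ lies in $\D_\Lambda^\circ$. Define the tilted laws $d\tilde P_n=e^{v_n{\bf \eta}\cdot{\bf Z}_n-\log\E[e^{v_n{\bf \eta}\cdot{\bf Z}_n}]}dP$. Their limiting log-moment generating function is $\Lambda(\cdot+{\bf\eta})-\Lambda({\bf\eta})$, which is finite near ${\bf 0}$. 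Applying the already-proven upper bound to $\tilde P_n$ on the closed set $B_\delta({\bf y})^c$, and using that ${\bf y}$ is exposed, one gets $\tilde P_n(B_\delta({\bf y}))\to1$. Undoing the tilt then gives the bound $-\Lambda^*({\bf y})-\delta\|{\bf\eta}\|$.

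\textbf{Main obstacle.} The remaining step, which I expect to be the hardest, is the convex-analytic one: showing that for every open $O$ we have $\inf_{O\cap\mathcal F}\Lambda^*=\inf_O\Lambda^*$, where $\mathcal F$ is the set of such exposed points. This is precisely where essential smoothness and lower semi-continuity enter. Every ${\bf y}=\nabla\Lambda({\bf\eta})$ with ${\bf\eta}\in\D_\Lambda^\circ$ is exposed with exposing hyperplane ${\bf\eta}$; this uses differentiability together with strict support, obtained by comparing with the tangent plane. By Rockafellar's theorem (Corollary~26.4.1 in \emph{Convex Analysis}), steepness plus lower semi-continuity imply that $\nabla\Lambda(\D_\Lambda^\circ)$ contains the relative interior of $\D_{\Lambda^*}$. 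Finally, convexity of $\Lambda^*$ lets us approximate any ${\bf x}\in O\cap\D_{\Lambda^*}$ along a segment toward a relative interior point without increasing $\Lambda^*$ beyond ${\bf x}$'s value plus $\epsilon$. Combining these three facts with the bound for exposed points gives \eqref{ldpaperti}.
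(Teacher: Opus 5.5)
Your proposal is correct and is essentially the standard proof of \cite[Th.~2.3.6]{dz10}: the upper bound on compacts plus exponential tightness, the lower bound at exposed points via exponential tilting, and Rockafellar's Corollary~26.4.1 giving $\mathrm{ri}\,\D_{\Lambda^*}\subseteq\nabla\Lambda(\D_\Lambda^\circ)$. That proof is exactly what the paper relies on, since it states the theorem without proof and simply cites this reference. The one step you leave implicit is that $\inf_{B_\delta({\bf y})^c}\tilde\Lambda^*>0$. This needs the infimum to be attained (goodness of $\tilde\Lambda^*({\bf x})=\Lambda^*({\bf x})-\boldsymbol{\eta}\cdot{\bf x}+\Lambda(\boldsymbol{\eta})$), combined with exposedness and the Fenchel inequality $\Lambda^*({\bf y})\ge\boldsymbol{\eta}\cdot{\bf y}-\Lambda(\boldsymbol{\eta})$, but this is routine.
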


\begin{remark}
\label{rem:convergence}
The function $\Lambda^*$ in Theorem \ref{GET} is said to be the Legendre-Fenchel transform of the function $\Lambda$, and
it is a convex and lower semi-continuous function. If we can apply the G\"{a}rtner-Ellis Theorem, then the rate function uniquely
vanishes at ${\bf x}=\nabla \Lambda({\bf 0})$.
This property can be proved by some standard argument of convex analysis, 
we here recall in Section \ref{app unique point}. 
Therefore, as mentioned at point 3 above, $\{{\bf Z}_n\}_{n\geq 1}$ converges in probability to $\nabla \Lambda({\bf 0})$.
\end{remark}

\begin{remark}\label{insieme rate finita}
Under the assumptions of the G\"{a}rtner-Ellis Theorem (in particular by the differentiability of $\Lambda$), 
we have
$${\bf x}\in \mathcal{D}_{\Lambda^*}^{\ o} \;\Longleftrightarrow\; \mathbf{x} \in \left\{ \nabla \Lambda(\mathbf{t}) : \mathbf{t} \in \mathcal{D}_\Lambda^{\ o} \right\} \ .
$$
\end{remark}

\begin{remark}\label{rem ess}
The covariance matrix $V$ above is the Hessian matrix of the function $\Lambda$ (for the initial LDP with speed $n$)
evaluated at ${\bf t}={\bf 0}$.

In both instances where the Gartner–Ellis Theorem will be applied later on (namely, 
for the initial large deviations principle with speed $n$, and for the moderate deviations regime), 
the function $\Lambda$ is finite and differentiable all over $\R^m$; 
hence, the steepness condition will be trivially satisfied in each case.  
\end{remark}

\section{Multivariate symbol statistics in rational models}
\label{sec:mratmod}

In this section we define the rational stochastic model used in this work.

Given a positive integer $\ell$, 
consider a finite alphabet $\Sigma =\{a_1,\ldots,a_\ell,b\}$.
As usual, $\Sigma^*$ denotes the set of all words over $\Sigma$,
$\epsilon$ is the empty word in $\Sigma^*$, $\Sigma^+=\Sigma^* \backslash \{\epsilon\}$ while,
for every $w\in \Sigma^*$ and every $x\in \Sigma$, $|w|$ is the length of $w$ and
$|w|_x$ is the number of occurrences of $x$ in $w$.
Clearly $\Sigma^*$ together with $\epsilon$ and the concatenation between words forms a monoid
(called the free monoid over $\Sigma$).
For every $n\in \N$, we also denote by $\Sigma^n$ the set $\{w\in\Sigma^* : |w|=n\}$.
Moreover, for an integer $m\geq 1$,
fix two (column) vectors $\xi, \eta \in \R_+^m$,  a matrix $B \in \R_+^{m\times m}$
and a set of matrices $A_i \in \R_+^{m\times m}$, $i=1,\ldots,\ell$, 
where $\R_+ =[0,\infty)$.

Now, let $\mu : \Sigma^* \rightarrow \R_+^{m\times m}$ be the monoid morphism generated by mapping
$\mu(b) = B$ and $\mu(a_i) = A_i$, for every $i=1,\ldots,\ell$.
Clearly $\mu(\epsilon)=I$ and, for every $w=w_1\cdots w_n \in \Sigma^+$ 
with $w_i\in\Sigma$ for each $i$, one has $\mu(w)=\mu(w_1)\cdots \mu(w_n)$.
We define the function $r : \Sigma^* \rightarrow  \R_+$ as the map associating each $w\in \Sigma^*$ 
with the value $\xi' \mu(w) \eta$.
According to the literature \cite{br88,ss78}, 
$r$ is \emph{rational} formal series in the non-commutative variables $a_1,\ldots,a_\ell,b$,
and it may be represented in form $r=\sum_{w\in\Sigma^*} (r,w) w$,
where $(r,w)$ denotes the value of $r$ at the word $w$, that is $(r,w)=\xi' \mu(w) \eta$.
We say that the triple
$(\xi,\mu,\eta)$ is a \emph{linear representation} of $r$.
Clearly, such a triple $(\xi,\mu,\eta)$ can be considered as a finite state automaton over the alphabet $\Sigma$,
with transitions weighted by positive real values.
Thus, each matrix $A_i$ (resp. $B$) represents the weights of the transitions labelled by $a_i$ (resp. $b$),
while $\xi$ (resp. $\eta$) represents the weights of the initial (resp. final) states.

Throughout this work, we assume that the set $\{w\in\Sigma^n : (r,w)>0\}$ is non-empty
for every $n\in\N_+$ large enough (so that $\xi\neq {\bf 0} \neq \eta$), and that
all $A_i$ and $B$ are non-zero matrices (i.e., each of them has at least one positive entry).
Note that, for every $n\in\N$,
one can easily compute the sum of all values of $r$ associated with words in $\Sigma^n$:
in fact, defining 
$M= A_1+\cdots+A_\ell+B$, we have
\begin{equation}
\label{sumlinexpr}
\sum_{w\in\Sigma^n} (r,w) = \xi' \sum_{w\in\Sigma^n} \mu(w) \eta =
\xi'  \left( \prod_{i=1}^n \ \sum_{w_i\in\Sigma} \mu(w_i)   \right) \eta =
\xi' M^n \eta \ .
\end{equation}
Thus, we can consider the probability measure $\mbox{Pr}$ over the set $\Sigma^n$
given by
\begin{equation}
\label{probw}
\mbox{Pr}(w): = \frac{(r,w)}{\sum_{x\in\Sigma^n} (r,x)} = \frac{\xi' \mu(w) \eta}{\xi' M^n \eta} 
\ , \qquad  \forall \ w\in \Sigma^n \ .
\end{equation}

Note that, if $r$ is the characteristic series of a language $L\subseteq \Sigma^*$,
then $\mbox{Pr}$ is the uniform probability function over the set
$L\cap \Sigma^n$.
Also observe that the traditional Markovian models (to generate a word at random in $\Sigma^*$) 
occur when $M$ is a stochastic matrix, $\xi$ is a stochastic array and $\eta'=(1,1\ldots,1)$.

Then, under the previous hypotheses, for every $n\in \N_+$ we can define the multivariate random variable 
$${\bf Y}_n = ( |w|_{a_1},  |w|_{a_2}, \ldots,  |w|_{a_\ell} ) \ ,$$
where $w$ is chosen at random in $\Sigma^n$ with probability $\mbox{Pr}(w)$.
We have ${\bf Y}_n =(Y_{n,1},\ldots,Y_{n,\ell}) \in \{0,1,\ldots,n\}^{\ell}$ and
$\sum_{i=1}^\ell Y_{n,i} \leq n$.
Thus, ${\bf Y}_n$ takes value in the set $\mbox{Sim}_n$ defined by
$$\mbox{Sim}_n := \left\{{\bf k}=(k_1,\ldots,k_\ell) \in \N^\ell : \sum_{i=1}^n k_i \leq n \right\} \ .$$
Moreover, by relations (\ref{sumlinexpr}) and (\ref{probw}) the probability function of ${\bf Y}_n$ is given by
\begin{eqnarray}
\nonumber
p_n({\bf k}) := P({\bf Y}_n ={\bf k}) & = & 
\frac{\displaystyle \sum_{|w|=n, |w|_{a_1}=k_1, \ldots,  |w|_{a_\ell}=k_\ell} (r,w)}{\displaystyle \sum_{w\in \Sigma^n} (r,w)}
\ = \\
\label{probipsilon}
& = &
\frac{[{\bf x}^{{\bf k}}]\ \xi' (A_1x_1+\cdots + A_\ell x_\ell+B)^n \eta}{\xi' M^n \eta}
\ , \qquad \forall\ {\bf k} \in \mbox{Sim}_n \ ,
\end{eqnarray}
where, for any polynomial $g({\bf x})\in \R[{\bf x}]$, we denote by 
$[{\bf x}^{{\bf k}}] g({\bf x})$ the coefficient of the monomial $x_1^{k_1}\cdots x_\ell^{k_\ell}$
of degree ${\bf k}$ in $g({\bf x})$.

Now, let us consider the moment generating function $\Psi_n({\bf t})$ of ${\bf Y}_n$,
for ${\bf t}\in \R^\ell$.
Defining
\begin{equation}
\label{accaenne}
h_n({\bf t}) := \xi' (A_1e^{t_1}+\cdots + A_\ell e^{t_\ell}+B)^n \eta \ ,
\end{equation}
and using relation (\ref{probipsilon}) we have
\begin{equation}
\label{fgenmom}
\Psi_n({\bf t}) := \E(e^{{\bf t}\cdot{\bf Y}_n}) =
\sum_{{\bf k}\in \mbox{Sim}_n} p_n({\bf k}) e^{{\bf t}\cdot{\bf k}} =  
\frac{h_n({\bf t})}{h_n({\bf 0})}\ , \qquad \ \forall\ {\bf t} \in\R^\ell \ .
\end{equation}
Let us also define
$$
\Phi_n({\bf t}) := \log \Psi_n({\bf t}) = \log \E(e^{{\bf t}\cdot{\bf Y}_n}) \ .
$$

It is well known \cite{gn97,fs09} that $\Phi_n({\bf t})$ can be used to evaluate
the mean values and the covariances of ${\bf Y}_n$.
Here, we have
\begin{eqnarray}
\label{media1}
\E[{\bf Y}_n] & := & (\E[Y_{n,1}],\ldots \E[Y_{n,\ell}]) \ = \ \nabla \log \Psi_n({\bf t})|_{{\bf t}={\bf 0}} \ = \ 
\nabla\Phi_n({\bf 0}) \ = \ \frac{\nabla h_n({\bf 0})}{h_n({\bf 0})} \ , \\
\label{hessiana}
\mbox{Cov}({\bf Y}_n) & := & 
[\text{Cov}(Y_{n,i},Y_{n,j})]_{i,j\in\{1,\ldots, \ell\}} \ = \  H \log \Psi_n({\bf t})|_{{\bf t}={\bf 0}} \ = \  
H\Phi_n({\bf 0}) \ ,
\end{eqnarray}
where, we recall, $H\Phi({\bf 0})$ is the Hessian matrix of $\Phi({\bf t})$ at the point ${\bf t}={\bf 0}$:
$$
H\Phi_n({\bf 0})=\Big[ \frac{\partial^2 \Phi_n({\bf t})}{\partial t_i\partial t_j} 
\Big|_{{\bf t}={\bf 0}}\Big]_{i,j\in\{1,\ldots, \ell\}}. 
$$
In particular, this means that 
$$
\text{Cov}(Y_{n,i},Y_{n,j})  = 
\frac{1}{h_n({\bf 0})} \frac{\partial^2 h_n({\bf t})}{\partial t_i \partial t_j}\Big|_{{\bf t}={\bf 0}} - 
\frac{1}{h_n({\bf 0})^2} \frac{\partial h_n({\bf t})}{\partial t_i}\Big|_{{\bf t}={\bf 0}} 
\frac{\partial h_n({\bf t})}{\partial t_j}\Big|_{{\bf t}={\bf 0}} \; ,
\qquad \forall i,j\in\{1,\ldots, \ell\} \ .
$$

\section{Analysis of the primitive case}
\label{sec:primitive}
In this section we study the asymptotic behaviour of $\{{\bf Y}_n\}$ when the matrix $M$ is primitive.

Recall that a matrix $T\in \R_+^{m\times m}$ is {\it primitive} if there exists a positive integer $n$
such that $T^n>0$ (i.e. all entries of $T^n$ are strictly positive).
In this case the well-known Perron-Frobenius Theorem holds.
Here, we recall such result together with some
further consequences useful in our context (see for instance \cite[Sec 1.1]{se81}).
The theorem states that any primitive matrix $T$ has a unique eigenvalue of largest modulus $\lambda$,
usually called the Perron-Frobenius eigenvalue of $T$;
moreover, $\lambda$ is real positive, it is a simple root of the characteristic polynomial of $T$
and admits strictly positive left and right eigenvectors.
Moreover, it is known that as $n\rightarrow \infty$
\begin{equation}
	\label{potenzaprim}
	T^n = \lambda^n \ u v' (1 + O(\varepsilon^n)) \ ,
\end{equation}
for some $\varepsilon \in (0,1)$, where $v'$ and $u$ are, respectively, left and right eigenvectors of $T$, 
relative to $\lambda$, normalized so that $v' u =1$.
Clearly $v$ and $u$ are strictly positive arrays.
It is also proved that the matrix $u v'$ satisfies the identity 
\begin{equation}
	\label{automatrice}
	u v' = \frac{\mbox{Adj}(\lambda I - T)}{\phi'(\lambda)} \ ,
\end{equation}
where $\phi(x) = \det(xI - T)$, and further $\phi'(\lambda) > 0$.

Now, let ${\bf Y}_n$ be defined by a linear representation $(\xi,\mu,\eta)$ 
and assume that the matrix $M=A_1+\cdots+A_\ell+B$ is primitive.
Denote by $\lambda$ the  Perron-Frobenius eigenvalue of $M$,
and let $v'$ and $u$ be strictly positive left and right eigenvectors of $M$, relative to $\lambda$,
normalized so that $v' u =1$.
Then, by relation (\ref{potenzaprim}), we have
$M^n = \lambda^n \ u v' (1 + O(\varepsilon^n))$,
for some $\varepsilon \in (0,1)$, and also the matrix $uv'$ can be written as in
(\ref{automatrice}).

The asymptotic properties of $\{{\bf Y}_n\}$ depend on the multivariate 
function $y({\bf t})$ defined as follows.
For every ${\bf t}\in\R^\ell$, consider the matrix $M({\bf t})$ given by
\begin{equation}
\label{emmedit}
M({\bf t}) = A_1e^{t_1}+A_2e^{t_2}+\cdots+A_\ell e^{t_\ell}+B \ ,
\end{equation}
and let $y = y({\bf t})$ be the implicit function defined by the equation
$$\mbox{det}(Iy-M({\bf t})) = 0 \ ,$$
with initial condition $y({\bf 0})=\lambda$.
Since $M({\bf t})$ is primitive for every ${\bf t}\in\R^\ell$, 
$y({\bf t})$ is its Perron-Frobenius eigenvalue, and hence $y({\bf t}) > 0$ for all ${\bf t}\in\R^\ell$.
Thus, $y({\bf t})$ is a positive function, well-defined and analytic all over $\R^\ell$.

Moreover, relations (\ref{potenzaprim}) and (\ref{automatrice}) can be applied to $M({\bf t})$,
and this allows us to prove the following ``quasi power'' condition 
on the moment generating function $\Psi_n({\bf z})$, which is a key property in our context.

\begin{theorem}
\label{th:quasipower}
Given a linear representation $(\xi,\mu,\eta)$ as defined in Section \ref{sec:mratmod},
assume that the matrix $M=A_1+\cdots+A_\ell+B$ is primitive, 
and let $\lambda$ be its Perron-Frobenius eigenvalue.
Then, there exists an analytic strictly positive function $r: \R^\ell \rightarrow \R_+$, 
satisfying $r({\bf 0})=1$, such that, for any ${\bf t} \in \R^\ell$, we have
\begin{equation}
\label{quasipeq}
\Psi_n({\bf t}) = r({\bf t}) \left( \frac{y({\bf t})}{\lambda} \right)^n \left( 1 + O(\varepsilon_{\bf t}^n) \right) \ ,
\end{equation}
for some $\varepsilon_{\bf t} \in (0,1)$.
\end{theorem}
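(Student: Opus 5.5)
The plan is to apply the Perron--Frobenius asymptotics (\ref{potenzaprim}) and the adjugate identity (\ref{automatrice}) to the matrix $M({\bf t})$ for each fixed ${\bf t}\in\R^\ell$. Then we combine the result with the expression $\Psi_n({\bf t})=h_n({\bf t})/h_n({\bf 0})$ from (\ref{fgenmom}).

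\textbf{Primitivity of $M({\bf t})$.} Each entry of $M({\bf t})$ is positive exactly when the corresponding entry of $M$ is positive, because every $e^{t_i}>0$. Hence $M({\bf t})$ has the same zero pattern as $M$, and $M({\bf t})^n$ has the same zero pattern as $M^n$. So $M({\bf t})$ is primitive, and its Perron--Frobenius eigenvalue is $y({\bf t})$.

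\textbf{Asymptotics of $h_n$.} Applying (\ref{potenzaprim}) to $M({\bf t})$ gives
$$M({\bf t})^n = y({\bf t})^n\, u({\bf t})v({\bf t})'\,(1+O(\varepsilon_{\bf t}^n)),$$
with $u({\bf t}),v({\bf t})>0$ and $v({\bf t})'u({\bf t})=1$. Therefore
$$h_n({\bf t})=\xi' M({\bf t})^n\eta = y({\bf t})^n\, c({\bf t})\,(1+O(\varepsilon_{\bf t}^n)),\qquad c({\bf t}):=(\xi'u({\bf t}))(v({\bf t})'\eta).$$

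Here $c({\bf t})>0$ because $\xi,\eta$ are nonzero and nonnegative and $u({\bf t}),v({\bf t})$ are strictly positive. This is what makes the relative error meaningful: the matrix $O(\cdot)$ term, multiplied by $\xi'\cdot\eta$, is bounded by a constant times $y({\bf t})^n\varepsilon_{\bf t}^n$, and that can be absorbed into $c({\bf t})$ times a relative error.

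At ${\bf t}={\bf 0}$ the same gives $h_n({\bf 0})=\lambda^n c({\bf 0})(1+O(\varepsilon_{\bf 0}^n))$. Dividing the two expansions, and using $(1+O(\varepsilon_{\bf 0}^n))^{-1}=1+O(\varepsilon_{\bf 0}^n)$, yields (\ref{quasipeq}) with
$$r({\bf t}):=\frac{c({\bf t})}{c({\bf 0})},\qquad \max(\varepsilon_{\bf t},\varepsilon_{\bf 0})$$
in place of $\varepsilon_{\bf t}$. Clearly $r({\bf 0})=1$ and $r>0$.

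\textbf{Analyticity of $r$.} This is the main point needing care, since the eigenvectors $u({\bf t}),v({\bf t})$ are only determined up to normalization. I would avoid choosing them and instead use (\ref{automatrice}):
$$c({\bf t})=\xi'u({\bf t})v({\bf t})'\eta=\frac{\xi'\,\mbox{Adj}(y({\bf t})I-M({\bf t}))\,\eta}{\phi_{\bf t}'(y({\bf t}))},\qquad \phi_{\bf t}(x)=\det(xI-M({\bf t})).$$
This expression is independent of normalization. The entries of $M({\bf t})$ are analytic in ${\bf t}$. Moreover, $y({\bf t})$ is analytic: it is a simple root of $\phi_{\bf t}$, so $\partial_x\phi_{\bf t}(y({\bf t}))=\phi'_{\bf t}(y({\bf t}))>0$, and the analytic implicit function theorem applies, as already noted in the text. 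The numerator is a polynomial in the entries of $M({\bf t})$ and in $y({\bf t})$, and the denominator is analytic and nonvanishing. Hence $c$, and therefore $r$, is analytic on $\R^\ell$.

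Only pointwise (in ${\bf t}$) error rates are claimed, so no uniformity argument is needed. If uniformity on compacts were required later, I would obtain it from the continuity of the spectral gap $|\lambda_2({\bf t})|/y({\bf t})<1$.
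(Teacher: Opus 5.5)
Your proposal is correct and follows essentially the same route as the paper: apply the Perron--Frobenius asymptotics to $M({\bf t})$, obtain $h_n({\bf t})=y({\bf t})^n\,\xi'u_{\bf t}v_{\bf t}'\eta\,(1+O(\varepsilon_{\bf t}^n))$, divide by $h_n({\bf 0})$, and deduce analyticity of $r$ from the adjugate identity (\ref{automatrice}). You also make explicit several points the paper leaves implicit, all of them sound: primitivity of $M({\bf t})$ via its zero pattern, strict positivity of $\xi'u_{\bf t}v_{\bf t}'\eta$ (so the error term is genuinely relative), the contribution of the error term from $h_n({\bf 0})$, and the fact that $u_{\bf t}v_{\bf t}'$ does not depend on how the eigenvectors are normalized.
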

{\bf Proof.}
Applying relation (\ref{potenzaprim}) to the matrix $M({\bf t})$, for any ${\bf t} \in \R^\ell$, we obtain
$$
M({\bf t})^n = y({\bf t})^n \, u_{\bf t} v_{\bf t}'\, (1 + O(\varepsilon_{\bf t}^n)) \ ,
$$
for some $\varepsilon_{\bf t} \in (0,1)$,
where $v_{\bf t}'$ and $u_{\bf t}$ are strictly positive left anf right eigenvectors of $M({\bf t})$,
relative to $y({\bf t})$, such that $v_{\bf t}' u_{\bf t} = 1$.
This implies
\begin{equation}
\label{accanditas}
h_n({\bf t}) = \xi' M({\bf t})^n \eta =  y({\bf t})^n \, \xi'u_{\bf t} v_{\bf t}'\eta \, (1 + O(\varepsilon_{\bf t}^n))\ ,
\end{equation}
and hence
$$
\Psi_n({\bf t}) = \frac{h_n({\bf t})}{h_n({\bf 0})} = \left( \frac{y({\bf t})}{\lambda} \right)^n
\, r({\bf t}) \, (1 + O(\varepsilon_{\bf t}^n)) \ ,
$$
where $r({\bf t}) = \frac{\xi'u_{\bf t} v_{\bf t}'\eta}{\xi' u v '\eta}$.
Clearly $r({\bf 0})=1$.
Moreover, applying equation (\ref{automatrice}) to $M({\bf t})$, 
we have that all entries of $u_{\bf t} v_{\bf t}'$ are analytic functions,
which proves that also $r({\bf t})$ is analytic and strictly positive all over $\R^\ell$.
\fdimo

\bigskip
Now, let us evaluate mean value, variances and covariances of $Y_{n,i}$, for $i\in\{1,\ldots,\ell\}$,
when the matrix $M$ is primitive.
Here and in the sequel, for any pair of indices $i,j$, we assume
$y'_i({\bf t}) := \partial_i y({\bf t}) = \frac{\partial y({\bf t})}{\partial t_i}$ and
$y''_{ij}({\bf t}) := \partial_{ij} y({\bf t}) = \frac{\partial^2 y({\bf t})}{\partial t_i \partial t_j}$,
and use the same notation for $r({\bf t})$.
In the univariate case, i.e. for $\ell=1$, the asymptotic properties of 
$\E({\bf Y}_n)$ and $\mbox{Var}({\bf Y}_n)$ are known when $M$ is primitive
\cite{bcgl03,dgl04} and could be applied in our context.
However, here we prefer to present a direct proof, based on Theorem \ref{th:quasipower},
which extends the analysis to the covariances, allows us to use some steps of the argument in subsequent sections.

Let us consider, for any ${\bf t} \in \R^\ell$, the generating fuction of $\{h_n({\bf t})\}$, that is
\begin{eqnarray*}
\sum_{n \geq 0} h_n({\bf t}) w^n  & = &  \xi' \left( \sum_{n \geq 0} M({\bf t})^n w^n \right) \eta =
\xi' \left( I - w M({\bf t}) \right)^{-1} \eta \\
& = & \frac{\xi' \mbox{Adj} ( I - w M({\bf t}) \eta }{\det (I - w M({\bf t}) )}
\end{eqnarray*}
Note that the roots of the denominator are the inverses of the eigenvalues of $M({\bf t})$.
Hence, we have
$$
\sum_{n \geq 0} h_n({\bf t}) w^n = \frac{p(w,{\bf t})}{\prod_{i=1}^m (1 - w \alpha_i({\bf t}))}
$$
where $p(w,{\bf t})$ is a suitable polynomial in $w$ and $e^{t_i}$ ($i=1,\ldots, m$), 
while the values $\alpha_i({\bf t})$'s are the $m$ eigenvalues of $M({\bf t})$.
By the primitivity of $M({\bf t})$, only one of them coincides with $y({\bf t}) > 0$,
while all the other are in modulus smaller than $y({\bf t})$.
Then, from the last identity above, it is clear that $h_n({\bf t})$ is a finite sum of terms of the form
$$
c n^k \alpha_i({\bf t})^n
$$
where $k\in \N$ and $c$ is a constant (possibly depending on $i$ and $k$).
As a consequence, also the partial derivatives of $h_n({\bf t})$  are sums of terms of the same form.
This implies that the quantities $O(\varepsilon_{\bf t}^n)$ appearing on the right hand side of
(\ref{accanditas}) and (\ref{quasipeq}) satisfy the relation
\begin{equation}
\label{nablaepsilon}
\nabla O(\varepsilon_{\bf t}^n) = O(\varepsilon^n) 
\end{equation}
for some $\varepsilon \in (0,1)$ possibly depending on ${\bf t}$.

\subsection{Mean values}
From equation (\ref{media1}) we know that
\begin{equation}
\label{attesa Y_n_bis}
\E[{\bf Y}_n] = \nabla\log \Psi_n({\bf t})_{|{\bf t}={\bf 0}}  = \nabla\log\E[e^{{\bf t}\cdot {\bf Y}_n}]_{|{\bf t}={\bf 0}}.
\end{equation}
Applying the quasi-power equation (\ref{quasipeq}) and taking into account relation (\ref{nablaepsilon}),
we obtain
\begin{multline}
\E[{\bf Y}_n] = \nabla\log\Psi_n({\bf t})_{|{\bf t}={\bf 0}} = \nabla\log \Big( r({\bf t}) \left( \frac{y({\bf t})}{\lambda} \right)^n \left( 1 + O(\varepsilon_{\bf t}^n) \right)\Big)_{|{\bf t}={\bf 0}} \\
\label{derivata parziale}
= \frac{\nabla r({\bf 0})}{r({\bf 0})} + n\frac{\nabla y({\bf 0})}{y({\bf 0})}+\nabla\log \left( 1 + O(\varepsilon_{\bf t}^n) \right)_{|{\bf t}={\bf 0}} = 
\boldsymbol{\beta} n + {\bf c}+O(\varepsilon^n),
\end{multline}
which means that
\begin{equation}
\label{attesa Y_n}
\E[{\bf Y}_n] = \boldsymbol{\beta} n + {\bf c} + O(\varepsilon^n)\ , \mbox{ where  }\ 
\boldsymbol{\beta} = \frac{\nabla y({\bf 0})}{\lambda}
\mbox{ and  }\ {\bf c} = \nabla r({\bf 0}).
\end{equation}

Moreover, from results appearing in \cite{bcgl03,dgl04}, it is known that
$y'_i({\bf 0}) = v A_i u$, for every $i\in\{1,\ldots,\ell\}$.
This implies
\begin{equation}
\label{betai}
\beta_i = \frac{y'_i({\bf 0})}{\lambda} = \frac{v'A_i u}{\lambda} \ ,\quad \mbox{ and hence $0 < \beta_i < 1$
and } 0<\sum_{i=1}^\ell \beta_i < 1 .
\end{equation}
Applying the same reasoning to $M({\bf t})$, we get that also the partial derivatives $y'_i({\bf t})$
are strictly positive, i.e.
\begin{equation}
\label{derpyt}
y'_i({\bf t}) = v'_{\bf t} A_i e^{t_i}u_{\bf t} > 0
\end{equation}
for every $i\in\{1,\ldots,\ell\}$ and every ${\bf t}\in \R^\ell$.

\subsection{Covariances}
From equation (\ref{hessiana}) we know that, for every $i,j \in \{1,\ldots,\ell\}$, one has
\begin{eqnarray*}
\mbox{Cov}(Y_{n,i},Y_{n,j}) & = &
 \E[Y_{n,i} Y_{n,j}]-  \E[Y_{n,i}]\E[Y_{n,j}] = \\
& = & \partial_{ij}  \log \E[e^{{\bf t}\cdot {\bf Y}_n}]|_{{\bf t}={\bf 0}} = 
 \partial_{ij} \log \Psi_n({\bf t})|_{{\bf t}={\bf 0}} \ .
\end{eqnarray*}
Then, applying the quasi-power relation  \eqref{quasipeq}, together with \eqref{derivata parziale} and \eqref{nablaepsilon},
we obtain
$$
\partial_{ij}\log\Psi_n({\bf t})=\partial_j\Big(\frac{ \partial_i r(\bf t)}{r({\bf t})} +  n\frac{ \partial_i y(\bf t)}{y({\bf t})} + O(\varepsilon^n)\Big) \ ,
$$
and hence
$$
H\log \Psi_n({\bf t})=H\log r({\bf t})+n H\log y({\bf t}) + O(\varepsilon^n) 
$$
As a consequence,
$$
\mbox{Cov}({\bf Y}_n) = n H\log y({\bf 0}) + H \log r({\bf 0})  + O(\varepsilon^n) = 
n\Gamma + C + O(\varepsilon^n) \ ,
$$
where the matrices $\Gamma =[\gamma_{ij}]$ and $C = [c_{ij}]$ are defined by
\begin{eqnarray}
\label{gammaij}
\gamma_{ij} & = & \partial_{ij}\log y({\bf t})_{|{\bf t}={\bf 0}} =
\frac{\partial_{ij}y({\bf t})\ y({\bf t})-\partial_iy({\bf t})\partial_jy({\bf t})}{y({\bf t})^2}_{|{\bf t}={\bf 0}} = 
\frac{y''_{ij}({\bf 0})}{\lambda} - \frac{y'_i({\bf 0}) y'_j({\bf 0})}{\lambda^2}\ , \\
\nonumber
c_{ij} & = & r''_{ij}({\bf 0}) - r'_i({\bf 0}) r'_j({\bf 0}) \ .
\end{eqnarray}
In particular, the variance of each $Y_{n,i}$ is given by
\begin{eqnarray*}
\mbox{Var}(Y_{n,i}) & = & \gamma_{ii} n + c_{ii} + O(\varepsilon^n) \ , \ \ \mbox{where}\\
\gamma_{ii} & = & \frac{y''_{ii}({\bf 0})}{\lambda} - \left( \frac{y'_i({\bf 0})}{\lambda} \right)^2 \ \ 
\mbox{and}\ \ c_{ii} = r''_{ii}({\bf 0}) - (r'_i({\bf 0}))^2 \ .
\end{eqnarray*}
Moreover, applying \cite[Theorem 3]{bcgl03} in our context, it is easy to see that $\gamma_{ii} > 0$ 
for every $i\in \{1,\ldots \ell\}$.
This means that, in our hypotheses, $\mbox{Var}(Y_{n,i}) = \Theta(n)$ for each $i$.

Clearly from the previous equalities we obtain the following limits:
\begin{equation}
\label{covgamma}
\mbox{Cov}\left(\frac{Y_{n,i}}{\sqrt{n}}, \frac{Y_{n,j}}{\sqrt{n}}\right) \to \gamma_{ij}\ ,\quad  \mbox{for } n\to\infty\ ,\end{equation}
and in particular 
\begin{equation}
\label{vargamma}
\mbox{Var}\left(\frac{Y_{n,i}}{\sqrt{n}}\right) \to \gamma_{ii}\ ,\quad  \mbox{for } n\to\infty \ .
\end{equation}

\section{Multivariate Gaussian limit}
\label{sec:normalap}

Here we prove that, still under primitive hypotheses,
the random variable $\frac{{\bf Y}_n-n\boldsymbol{\beta}}{\sqrt{n}}$ has a (multivariate) Gaussian limit distribution.
The proof is based on the study of its moment generating function and the quasi-power property presented
in Theorem \ref{th:quasipower}.

First, let us fix some notation.
For any array ${\bold m} \in \R^\ell$ and any symmetric, positive semidefinite matrix $C \in \R^{\ell\times\ell}$,
let $N({\bold m}, C)$ be a multivariate Gaussian r.v. of $\ell$ components, having mean values ${\bold m}$ and
covariance matrix $C$.
We recall \cite{gn97} that the moment generating function of $N({\bold m}, C)$ is
$$
\Phi_{{\bf m} C}({\bold t}) := \exp\left\{{\bold m}' {\bold t} + \frac{1}{2} {\bf t}'C {\bold t} \right\}\ ,
\qquad {\bold t}\in \R^\ell \ .
$$

\begin{theorem}
\label{th:normalap}
Let $(\xi,\mu,\eta)$ be a linear representation such that the matrix $M=A_1+\cdots+A_\ell+B$ is primitive,
and let $\lambda$ be its Perron-Frobenius eigenvalue.
Also, let $\boldsymbol{\beta}$ and $\Gamma$ be defined as in the previous section.
Then $\frac{{\bf Y}_n-n\boldsymbol{\beta}}{\sqrt{n}}$ converges in distribution to a random variable
$N({\bold 0}, \Gamma)$.
\end{theorem}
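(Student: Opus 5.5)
We prove convergence of moment generating functions pointwise on $\R^\ell$. For a random vector whose limit has a finite moment generating function on a neighbourhood of ${\bf 0}$, pointwise convergence of the moment generating functions on such a neighbourhood implies convergence in distribution. This is the multivariate Curtiss theorem; equivalently, apply the univariate version to every linear combination ${\bf s}\cdot{\bf Z}_n$ and invoke Cram\'er--Wold.

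So fix ${\bf t}\in\R^\ell$ and set ${\bf Z}_n=\frac{{\bf Y}_n-n\boldsymbol{\beta}}{\sqrt n}$. Then
$$\log \E[e^{{\bf t}\cdot{\bf Z}_n}] = -\sqrt n\,\boldsymbol{\beta}\cdot{\bf t} + \log\Psi_n({\bf t}/\sqrt n),$$
and by the quasi-power Theorem \ref{th:quasipower},
$$\log\Psi_n({\bf t}/\sqrt n) = \log r({\bf t}/\sqrt n) + n\log\frac{y({\bf t}/\sqrt n)}{\lambda} + \log\bigl(1+O(\varepsilon_{{\bf t}/\sqrt n}^n)\bigr).$$
Since $r$ is analytic with $r({\bf 0})=1$, the first term tends to $0$. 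Write $g({\bf s})=\log y({\bf s})-\log\lambda$, which is analytic with $g({\bf 0})=0$, $\nabla g({\bf 0})=\boldsymbol{\beta}$ and $Hg({\bf 0})=\Gamma$ by \eqref{attesa Y_n} and \eqref{gammaij}. A second-order Taylor expansion gives
$$n\,g({\bf t}/\sqrt n)=\sqrt n\,\boldsymbol{\beta}\cdot{\bf t}+\tfrac12{\bf t}'\Gamma{\bf t}+O(n^{-1/2}).$$
The linear term cancels the centering, so the total tends to $\frac12{\bf t}'\Gamma{\bf t}$, which is the logarithm of the moment generating function of $N({\bf 0},\Gamma)$. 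The matrix $\Gamma$ is positive semidefinite because it is the limit of the covariance matrices $\mbox{Cov}({\bf Y}_n)/n$ (see \eqref{covgamma}), so $N({\bf 0},\Gamma)$ is well defined.

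The main obstacle is the error term. Theorem \ref{th:quasipower} gives $O(\varepsilon_{\bf t}^n)$ with both $\varepsilon_{\bf t}$ and the implied constant depending on ${\bf t}$, but here the argument ${\bf t}/\sqrt n$ varies with $n$. We need uniformity for ${\bf s}$ in a compact neighbourhood $K$ of ${\bf 0}$. The plan is the following.

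\begin{enumerate}
\item The eigenvalues of $M({\bf s})$ depend continuously on ${\bf s}$, and $y({\bf s})$ is a simple eigenvalue strictly dominating the others. Hence on $K$ there is a uniform bound $|\alpha_i({\bf s})|\le \rho\, y({\bf s})$ for the non-dominant eigenvalues, with some fixed $\rho<1$.
\item Use the spectral decomposition $M({\bf s})^n = y({\bf s})^n u_{\bf s}v_{\bf s}' + R({\bf s})^n$. Here $R({\bf s})$ is continuous in ${\bf s}$ and has spectral radius at most $\rho\, y({\bf s})$.
\item A uniform Gelfand-type bound then gives $\|R({\bf s})^n\|\le C(\rho+\delta)^n y({\bf s})^n$ on $K$. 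To justify it, write $R({\bf s})^n$ via the Cauchy integral $\frac{1}{2\pi i}\oint z^n (zI-M({\bf s}))^{-1}dz$ over the circle of radius $(\rho+\delta)y({\bf s})$. The resolvent is bounded on $K$ there because the circle avoids the spectrum uniformly.
\item Since $\xi' u_{\bf s}v_{\bf s}'\eta$ is bounded below on $K$, the relative error is $O(\varepsilon^n)$ with $\varepsilon$ and the constant uniform for ${\bf s}\in K$.
\end{enumerate}

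With this uniformity, $\log(1+O(\varepsilon^n))\to0$ along ${\bf s}={\bf t}/\sqrt n\in K$ for $n$ large. This completes the convergence of moment generating functions for every ${\bf t}$, and hence the theorem.
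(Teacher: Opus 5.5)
Your proposal is correct and follows the same route as the paper. Both arguments show that the moment generating function of $({\bf Y}_n-n\boldsymbol{\beta})/\sqrt{n}$ converges to $\exp\{\frac{1}{2}{\bf t}'\Gamma{\bf t}\}$ by evaluating the quasi-power formula of Theorem~\ref{th:quasipower} at ${\bf t}/\sqrt{n}$ and expanding $\log y$ to second order, where the linear term cancels the centering. Your argument that the error term $O(\varepsilon^n_{\bf s})$ is uniform for ${\bf s}$ in a compact neighbourhood of ${\bf 0}$ (via the spectral projection and a Cauchy-integral resolvent bound), and your appeal to Curtiss/Cram\'er--Wold, make explicit two points that the paper's proof takes for granted.
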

{\bf Proof.}
First note that $\Gamma$ is symmetric and positive semidefinite, since it is the limit of covariance matrices
(by relations (\ref{covgamma}) and (\ref{vargamma}) ).

Then, let us denote by $M_n({\bf t})$ the moment generating function of $\frac{{\bf Y}_n-n\boldsymbol{\beta}}{\sqrt{n}}$,
that is
$$
M_n({\bf t}) := \E\Big[ \exp\Big\{ {\bf t}' \frac{{\bf Y}_n-n\boldsymbol{\beta}}{\sqrt{n}} \Big\}\Big] \ .
$$
The result follows by showing that, for $n \rightarrow \infty$, $M_n({\bf t})$ converges to 
$\exp\left\{\frac{1}{2} {\bf t}'\Gamma {\bold t} \right\}$ for any ${\bf t}\in \R^\ell$
(see for instance \cite{gn97,fs09}).
To this end, simply applying Theorem \ref{th:quasipower}, we have
\begin{eqnarray*}
M_n({\bf t}) & = & \Psi\left( \frac{\bf t'}{\sqrt{n}} {\bf Y}_n \right) \cdot \exp\{-{\bf t'}\boldsymbol{\beta}\sqrt{n}\} \\
& = & r({\bf t}/\sqrt{n}) \left( \frac{y({\bf t}/\sqrt{n})}{\lambda} \right)^n \left( 1 + O(\varepsilon_{{\bf t}/\sqrt{n}}^n) \right)
\exp\{-{\bf t'}\boldsymbol{\beta}\sqrt{n}\} \\
& = & \exp \left\{ \log r({\bf t}/\sqrt{n}) + n \log \frac{y({\bf t}/\sqrt{n})}{\lambda} + O(\varepsilon_{{\bf t}/\sqrt{n}}^n)
- {\bf t'}\boldsymbol{\beta}\sqrt{n} \right\} \ .
\end{eqnarray*}
Recalling that $r({\bf t})$ is analytic at ${\bf 0}$, $r({\bf 0})=1$ and $\boldsymbol{\beta}= \nabla y({\bf 0})/\lambda$, 
by suitable expansions at the first and second order
of the previous terms, we obtain
\begin{eqnarray*}
M_n({\bf t}) & = & \exp \left\{ \frac{{\bf t}'}{\sqrt{n}} \nabla r({\bf 0}) + O(1/n) +
n \left[ \frac{{\bf t}'}{\sqrt{n}} \frac{\nabla y({\bf 0})}{\lambda} + 
\frac{1}{2} \frac{{\bf t}'}{\sqrt{n}} \Gamma \frac{{\bf t}'}{\sqrt{n}} + O(n^{-3/2}) \right] - {\bf t'}\boldsymbol{\beta}\sqrt{n}      \right\} \\
& = & 
\exp \left\{ \frac{1}{2} {\bf t}' \Gamma {\bf t} + O(1/\sqrt{n}) \right\} ,
\end{eqnarray*}
which concludes the proof.
\fdimo

\section{Large deviation results}
\label{sec:largedev}

In this section we prove that, if the matrix $M$ is primitive, then the sequence $\{{\bf Y}_n/n\}$ satisfies a LDP with speed $n$.
This implies that the same sequence converges to $\boldsymbol{\beta}$ almost surely.

\begin{theorem}
\label{th:largedev}
Let $(\xi,\mu,\eta)$ be a linear representation such that the matrix $M=A_1+\cdots+A_\ell+B$ is primitive,
and let $\lambda$ be its Perron-Frobenius eigenvalue.
Then, the sequence $\{{\bf Y}_n/n\}$ satisfies a LDP with speed $n$ and good rate function
\begin{equation}
\label{ratefun}
G^*({\bf x}) := \sup_{{\bf t}\in \R^\ell} \left\{ {\bf t}\cdot {\bf x} - \log\frac{y({\bf t})}{\lambda} \right\}.
\end{equation}
Moreover, $\{{\bf Y}_n/n\}$ converges to $\boldsymbol{\beta}$ almost surely.
\end{theorem}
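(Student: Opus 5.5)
We apply the G\"{a}rtner-Ellis Theorem (Theorem \ref{GET}) with ${\bf Z}_n={\bf Y}_n/n$ and speed $v_n=n$. The first step is to compute the limit
$$\Lambda({\bf t})=\lim_{n\to\infty}\frac{1}{n}\log\E\left[e^{n\,{\bf t}\cdot {\bf Y}_n/n}\right]=\lim_{n\to\infty}\frac{1}{n}\log\Psi_n({\bf t}).$$
We fix ${\bf t}\in\R^\ell$ and apply the quasi-power property of Theorem \ref{th:quasipower}. This gives
$$\frac1n\log\Psi_n({\bf t})=\frac{\log r({\bf t})}{n}+\log\frac{y({\bf t})}{\lambda}+\frac1n\log\left(1+O(\varepsilon_{\bf t}^n)\right).$$
Since $r({\bf t})>0$ is finite and $\varepsilon_{\bf t}\in(0,1)$, the first and last terms vanish as $n\to\infty$, so $\Lambda({\bf t})=\log\frac{y({\bf t})}{\lambda}$ for every ${\bf t}$. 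The point here is that, although $\varepsilon_{\bf t}$ depends on ${\bf t}$, the limit is pointwise, so no uniformity is needed.

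Next we verify the hypotheses of Theorem \ref{GET}. Because $y({\bf t})$ is a strictly positive analytic function on all of $\R^\ell$ (it is the Perron-Frobenius eigenvalue of the primitive matrix $M({\bf t})$), $\Lambda$ is finite, real-valued and differentiable, in fact analytic, everywhere. Hence $\D_\Lambda=\R^\ell$, ${\bf 0}\in\D_\Lambda^\circ$, and $\Lambda$ is continuous and therefore lower semi-continuous. Steepness holds vacuously because $\D_\Lambda$ has no boundary, as noted in Remark \ref{rem ess}. Convexity of $\Lambda$ follows because it is a pointwise limit of the convex functions $\frac1n\log\Psi_n$, which are normalized cumulant generating functions and convex by H\"older's inequality. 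Theorem \ref{GET} then gives the LDP with speed $n$ and good rate function $\Lambda^*=G^*$ as in (\ref{ratefun}).

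For the almost sure convergence, Remark \ref{rem:convergence} says that $G^*$ vanishes only at $\nabla\Lambda({\bf 0})=\nabla y({\bf 0})/\lambda=\boldsymbol{\beta}$ by (\ref{attesa Y_n}). To get a.s. convergence, we fix $\delta>0$ and apply the LDP upper bound to the closed set $C=\{{\bf x}:\|{\bf x}-\boldsymbol{\beta}\|\ge\delta\}$. Since $G^*$ is good and vanishes only at $\boldsymbol{\beta}$, we have $\inf_C G^*=:c_\delta>0$: the infimum is attained on the compact set $C\cap\{G^*\le 1\}$ when that set is nonempty, and is at least $1$ otherwise. Therefore $P(\|{\bf Y}_n/n-\boldsymbol{\beta}\|\ge\delta)\le e^{-nc_\delta/2}$ for all large $n$. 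These bounds are summable, so Borel-Cantelli yields ${\bf Y}_n/n\to\boldsymbol{\beta}$ almost surely (point 3 of Section \ref{sec:prelim}). One caveat is that this tacitly requires the ${\bf Y}_n$ to live on a common probability space; we adopt that convention, as the preliminaries do.

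The only step that is not routine is establishing the identity $\Lambda({\bf t})=\log(y({\bf t})/\lambda)$ for every ${\bf t}$, which relies on the quasi-power theorem holding at each fixed ${\bf t}$ and not just near ${\bf 0}$. Theorem \ref{th:quasipower} already provides this because $M({\bf t})$ is primitive for all ${\bf t}$, so the argument should go through without obstruction.
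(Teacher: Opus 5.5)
Your proposal is correct and follows essentially the same route as the paper. You compute $\Lambda(\mathbf t)=\log(y(\mathbf t)/\lambda)$ pointwise from the quasi-power theorem, apply G\"{a}rtner--Ellis with $\Lambda$ finite and differentiable on all of $\R^\ell$, and get almost sure convergence from the unique zero at $\nabla\Lambda(\mathbf 0)=\boldsymbol\beta$ plus a Borel--Cantelli argument, which the paper delegates to its appendix. One cosmetic fix: use $e^{-n\min(c_\delta,1)/2}$ in your exponential bound, because when $c_\delta=+\infty$ the LDP upper bound does not literally give $P=0$.
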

{\bf Proof.}
In order to apply Theorem \ref{GET}, let us consider the function 
$$
G({\bf t}) := \lim_{n\to\infty}\frac{1}{n}\log\mathbb{E}\left[e^{n \frac{{\bf t}\cdot{\bf Y}_n}{n}}\right]\ ,
\qquad \forall\ {\bf t}\in\R^\ell\ ,
$$
By Theorem \ref{th:quasipower} we have:
\begin{eqnarray*} 
G({\bf t}) & = & \lim_{n\to\infty}\frac{1}{n}\log\Psi_n({\bf t})\\
&=& \lim_{n\to\infty}\frac{1}{n}\log\Big(r({\bf t})\Big(\frac{y({\bf t})}{\lambda} \Big)^n(1+O(\varepsilon^n_{{\bf t}}))  \Big)\\&=& \lim_{n\to\infty}\Big(\frac{\log r({\bf t})}{n}+ \log\frac{y({\bf t})}{\lambda}+ \frac{\log(1+O(\varepsilon_{\bf t}^n))}{n}\Big)\nonumber\\&=&\log\frac{y({\bf t})}{\lambda}=\log\frac{y({\bf t})}{y({\bf 0})}.
\end{eqnarray*}
It is immediate to verify that  $G$ is finite-valued and differentiable all over $\R^\ell$.
Therefore, as observed in Remark \ref{rem ess}, $G$ is essentially smooth and, by Theorem \ref{GET},
we can state that sequence $\{{\bf Y}_n/n\}$ satisfies a LDP with speed $n$ 
and good rate function  $G^*({\bf x})$ given by (\ref{ratefun}).\\
Now, observe that relation \eqref{attesa Y_n} implies 
$\nabla G({\bf 0}) = \frac{\nabla{\bf y}({\bf 0})}{\lambda}=\boldsymbol{\beta}$.
Thus, by Remark \ref{rem:convergence} and point 3 of Section \ref{sec:prelim}, 
 $\{{\bf Y}_n/n\}$ converges to $\boldsymbol{\beta}$ almost surely because the speed of its LDP is $n$.\fdimo

As mentioned at point 2 of Section \ref{sec:prelim}, by our hypotheses on the linear representations $(\xi,\mu,\eta)$ 
(established in Section \ref{sec:mratmod}), 
the function $G^*$ given above may be finite only in the set
$$C_\bullet=\{ {\bf x}=(x_1,\ldots,x_\ell)\in\mathbb{R}^\ell : x_1,\ldots,x_\ell \geq 0, x_1+\cdots+x_\ell \leq 1\},$$
while $G^*({\bf x}) = +\infty$ for every ${\bf x} \in C_\bullet^c$.
This is also in accordance with Remark \ref{insieme rate finita}; indeed,
for every  ${\bf t} \in \R^\ell$ and each $i\in \{0,1,\ldots \ell\}$, applying (\ref{derpyt}) we have
$$
\frac{\partial G}{\partial t_i}({\bf t}) =
\frac{y'_i({\bf t})}{y({\bf t})} = 
\frac{ v'_{\bf t} A_i e^{t_i}u_{\bf t}}{ v'_{\bf t} M({\bf t}) u_{\bf t}} \ \in \ (0,1) \ .
$$
In particular, by relation (\ref{emmedit}), since $B\neq [0]$ and all components of $v_{\bf t}$ and $u_{\bf t}$ are
stricly positive, this implies
$$
0 < \sum_{i=1}^l \frac{\partial G}{\partial t_i}({\bf t}) < 1 \ , \quad \forall\ {\bf t} \in \R^\ell\ .
$$

\section{Moderate deviation results}
\label{sec:moderatedev}

In this section we prove a moderate deviation result based on the same primitive hypotheses used 
previously. This contribution is entirely original and it is applicable in the univariate scenario $\ell =1$ as a particular case. As established in Section \ref{sec:prelim}, this result serves as a mathematical bridge between two convergence behaviors: the almost sure convergence of ${\bf Y}_n / n$
to a constant, and the weak convergence of $\frac{{\bf Y}_n - \boldsymbol{\beta}n}{\sqrt{n}}$, or
$\sqrt{n}\left({\bf Y}_n / n-\boldsymbol{\beta}\right)$ in its equivalent form, to a centered Gaussian random variable.

\begin{theorem}
\label{th:moderdev}
Assume the same hypotheses of Theorem \ref{th:largedev}, and let $\boldsymbol{\beta}$ and
$\Gamma$ be defined by \eqref{betai} and \eqref{gammaij}, respectively
(i.e.  $\boldsymbol{\beta}=\frac{\nabla y({\bf 0})}{y({\bf 0})}$ and $\Gamma=H\log y({\bf 0})$).
Then, for every sequence of positive reals $\{a_n\}$ such that
$$
a_n\to 0 \qquad \mbox{and} \qquad na_n\to \infty \ ,
$$
we have that a LDP holds for $\displaystyle \frac{{\bf Y}_n-n\boldsymbol{\beta}}{\sqrt{n/a_n}}$ 
with speed $1/a_n$ and rate function 
$$
J^*({\bf x})=\sup_{{\bf t}}\Big\{{\bf t}\cdot {\bf x}-\frac{1}{2}{\bf t}'\Gamma {\bf t}\Big\}\ .
$$
\end{theorem}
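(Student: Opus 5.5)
The plan is to apply the G\"{a}rtner-Ellis Theorem (Theorem \ref{GET}) with $v_n=1/a_n$ and ${\bf Z}_n=\frac{{\bf Y}_n-n\boldsymbol{\beta}}{\sqrt{n/a_n}}$. The goal is to show that the limit
$$\Lambda({\bf t})=\lim_{n\to\infty} a_n\log\E\Big[e^{\frac{1}{a_n}{\bf t}\cdot {\bf Z}_n}\Big]$$
exists and equals $\frac12{\bf t}'\Gamma{\bf t}$ for every ${\bf t}\in\R^\ell$. This function is finite, differentiable (indeed a quadratic form) and continuous on all of $\R^\ell$, so by Remark \ref{rem ess} it is essentially smooth and lower semi-continuous, with ${\bf 0}$ interior to its domain. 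The theorem then yields the LDP with rate function $J^*$.

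The computation goes as follows. Note that $\frac{1}{a_n}{\bf t}\cdot{\bf Z}_n={\bf s}_n\cdot({\bf Y}_n-n\boldsymbol\beta)$ with ${\bf s}_n:={\bf t}/\sqrt{na_n}\to{\bf 0}$, since $na_n\to\infty$. Hence
$$a_n\log\E[\cdots]=a_n\big(\Phi_n({\bf s}_n)-n\,{\bf s}_n\cdot\boldsymbol\beta\big).$$
By Theorem \ref{th:quasipower},
$$\Phi_n({\bf s}_n)=\log r({\bf s}_n)+n\log\frac{y({\bf s}_n)}{\lambda}+\log(1+O(\varepsilon_{{\bf s}_n}^n)).$$
A second-order Taylor expansion of the analytic function $G({\bf s})=\log(y({\bf s})/\lambda)$ at ${\bf 0}$ gives
$$G({\bf s})={\bf s}\cdot\boldsymbol\beta+\tfrac12{\bf s}'\Gamma{\bf s}+O(\|{\bf s}\|^3),$$
using $G({\bf 0})=0$, $\nabla G({\bf 0})=\boldsymbol\beta$ and $HG({\bf 0})=\Gamma$. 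Therefore
$$n\,G({\bf s}_n)-n\,{\bf s}_n\cdot\boldsymbol\beta=\frac{{\bf t}'\Gamma{\bf t}}{2a_n}+O\big(n\,(na_n)^{-3/2}\big),$$
and multiplying by $a_n$ leaves $\frac12{\bf t}'\Gamma{\bf t}+O((na_n)^{-1/2})\to\frac12{\bf t}'\Gamma{\bf t}$. The term $a_n\log r({\bf s}_n)$ tends to $0$, because $r$ is continuous with $r({\bf 0})=1$ and $a_n\to0$.

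The main obstacle is the error term $a_n\log(1+O(\varepsilon_{{\bf s}_n}^n))$. Theorem \ref{th:quasipower} is stated pointwise in ${\bf t}$, with $\varepsilon_{\bf t}$ and the implied constant depending on ${\bf t}$, whereas here ${\bf s}_n$ moves with $n$. I will establish uniformity on a compact neighbourhood $K$ of ${\bf 0}$ through the spectral decomposition already used in the paper. For ${\bf s}\in K$ the Perron eigenvalue $y({\bf s})$ is simple and separated from the rest of the spectrum of $M({\bf s})$, by continuity of eigenvalues and compactness. Hence
$$\sup_{{\bf s}\in K}\frac{\max_{i\ge2}|\alpha_i({\bf s})|}{y({\bf s})}\le\varepsilon<1.$$
Write $M({\bf s})^n=y({\bf s})^nP({\bf s})+R({\bf s})^n$, where $P({\bf s})=u_{\bf s}v_{\bf s}'$ is the spectral projection, analytic by (\ref{automatrice}), and $R({\bf s})=M({\bf s})(I-P({\bf s}))$. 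By a uniform bound (e.g.\ via the resolvent integral over a fixed circle of radius $\varepsilon y$), $\|R({\bf s})^n\|\le C(\varepsilon' y({\bf s}))^n$ uniformly on $K$ for some $\varepsilon'<1$. This gives
$$h_n({\bf s})=y({\bf s})^n\,\xi'P({\bf s})\eta\,(1+O(\varepsilon'^n))$$
uniformly, because $\xi'P({\bf s})\eta$ is bounded away from $0$ on $K$ (it is positive and continuous). Consequently $\log(1+O(\varepsilon_{{\bf s}_n}^n))=O(\varepsilon'^n)$ for $n$ large, and after multiplying by $a_n$ this term vanishes.

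Assembling the three pieces gives $\Lambda({\bf t})=\frac12{\bf t}'\Gamma{\bf t}$, and Theorem \ref{GET} concludes the proof. If $\Gamma$ is only positive semidefinite, $J^*$ may take the value $+\infty$ off the range of $\Gamma$. This causes no problem, since Theorem \ref{GET} only needs $\Lambda$ to be finite and differentiable.
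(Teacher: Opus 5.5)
Your proposal is correct and follows the paper's proof. Both apply G\"{a}rtner--Ellis with speed $1/a_n$, evaluate the quasi-power factorization of Theorem \ref{th:quasipower} at ${\bf s}_n={\bf t}/\sqrt{na_n}$, split the result into the $a_n\log r$, error and main terms, and extract $\frac12{\bf t}'\Gamma{\bf t}$ by a second-order expansion of $\log(y/\lambda)$. Your uniform spectral-gap argument on a compact neighbourhood of ${\bf 0}$ makes rigorous a step the paper only treats informally, namely that $O(\varepsilon^n_{{\bf t}/\sqrt{na_n}})$ is $O(\epsilon^n)$ for some $\epsilon$ close to $\epsilon_{\bf 0}$.
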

{\bf Proof.}
For any $n \in \N$ and any ${\bf t}\in\R^\ell$, let
$$
J_n({\bf t}):= \frac{1}{1/a_n}\log\E\Big[ \exp\Big\{ \frac{{\bf t}'}{a_n}\frac{{\bf Y}_n-n\boldsymbol{\beta}}{\sqrt{n/a_n}} \Big\}\Big] \ .
$$
In order to evaluate $\lim_{n\to \infty} J_n({\bf t})$, we apply Theorem \ref{th:quasipower}.
For every ${\bf t}\in\R^\ell$, we get
\begin{eqnarray*}
 J_n({\bf t}) & = & a_n\log\E\Big[ \exp\Big\{ \frac{{\bf t}'({\bf Y}_n-n\boldsymbol{\beta})}{\sqrt{na_n}} \Big\}\Big] \\
 & = & a_n\Big( \log\E\Big[  \exp\Big\{\frac{{\bf t}'{\bf Y}_n}{\sqrt{na_n}}\Big\}\Big]-\frac{{\bf t}' n\boldsymbol{\beta}}{\sqrt{na_n}} \Big) \\ 
& = & a_n\Big(\log\Big[r( {\bf t}/\sqrt{na_n}) \Big( \frac{y({\bf t}/\sqrt{na_n})}{\lambda}\Big)^n\Big(1+O(\varepsilon^n_{{\bf t}/\sqrt{na_n}})\Big)  \Big] 
- \frac{{\bf t}'n\boldsymbol{\beta}}{\sqrt{na_n}} \Big)\\
& = & \underbrace{a_n \log r({\bf t}/\sqrt{na_n})}_{(*)}+\underbrace{a_n\log\Big(1+O(\varepsilon^n_{{\bf t}/\sqrt{na_n}})\Big)}_{(**)} +\underbrace{na_n\Big(\log \frac{y({\bf t}/\sqrt{na_n})}{\lambda} -\frac{{\bf t}'\boldsymbol{\beta}}{\sqrt{na_n}} \Big)}_{(***)} \ .
\end{eqnarray*}
Since $r({\bf t})$ is analytic near ${\bf 0}$ and $r({\bf 0})=1$, by the hypotheses on $a_n$ it is clear that
the term $(*)$ in the last expression converges to $0$ as $n\to \infty$.
Also $(**)$ tends to $0$, since ${\bf t}/\sqrt{na_n}$ approximates ${\bf 0}$, and hence
$(**) = O(a_n\epsilon^n)$, for some $\epsilon\in (0,1)$ close to $\epsilon_{\bf 0}$.

Now we show that $(***)$ tends to $\frac{1}{2}{\bf t}'\Gamma{\bf t}$,
by using the Mac Laurin expansion of $\log \frac{y({\bf s})}{\lambda}$ till the second order:
\begin{eqnarray*}
(***) & = & na_n\Big(\log \underbrace{\frac{y({\bf 0})}{\lambda}}_{=\lambda/\lambda=1}+  \frac{{\bf t}'}{\sqrt{na_n}}\underbrace{\frac{\nabla y({\bf 0})}{y({\bf 0})}}_{\boldsymbol{\beta}}  +\frac{1}{2}\frac{{\bf t}'}{\sqrt{na_n}} \Gamma\frac{{\bf t}}{\sqrt{na_n}}  + o\Big(\frac{1}{na_n}\Big)- \frac{{\bf t}'\boldsymbol{\beta}}{\sqrt{na_n}}\Big)\\
& = & na_n\Big(\frac{{\bf t}'\boldsymbol{\beta}}{\sqrt{na_n}}   + \frac{1}{2na_n}{\bf t}' \Gamma{\bf t} + o\Big(\frac{1}{na_n}\Big)- \frac{{\bf t}'\boldsymbol{\beta}}{\sqrt{na_n}} \Big) \\
& = & \frac{1}{2}{\bf t}'\Gamma{\bf t} + o(1) \ .
\end{eqnarray*}

Since the function 
$J({\bf t}) = \lim_{n\to\infty}J_n({\bf t}) =\frac{1}{2}{\bf t}' \Gamma{\bf t}$
is finite and differentiable for every ${\bf t}\in \R^\ell$, we can apply Theorem \ref{GET} and get the result. 
\fdimo

\begin{remark} 
If $\Gamma $ is invertible (i.e. $\text{det}(\Gamma)\neq 0$) 
then the $\sup$ in $J^*({\bf x})$ is taken for ${\bf t}$ such that 
${\bf x}=\Gamma{\bf t}$, that is in ${\bf t}=\Gamma^{-1}{\bf x}$.
Therefore, recalling that $(\Gamma^{-1})'=(\Gamma^{-1})$, we have
\begin{eqnarray*}
J^*({\bf x}) & = & (\Gamma^{-1}{\bf x})'{\bf x}-\frac{1}{2}(\Gamma^{-1}{\bf x})'\Gamma \Gamma^{-1}{\bf x}\\
 & = & {\bf x}'(\Gamma^{-1})'{\bf x}-\frac{1}{2}{\bf x}'(\Gamma^{-1})'{\bf x}\\
 & = & \frac{1}{2}{\bf x}\Gamma^{-1}{\bf x}.
\end{eqnarray*}

\end{remark}

\section{Conclusions}
\label{sec:conclusioni}

In this work we have studied the asymptotic behaviour of a sequence of multivariate
random variables $\{{\bf Y}_n\}$, representing the number of occurrences of a 
set of symbols in a word of length $n$, generated at random according to a rational
stochastic model.
We have considered the standard case when, in the model, the matrix of the total weights
of the transitions is primitive.
Under this assumption we have
$\E[{\bf Y}_n] = \boldsymbol{\beta} n + o(n)$ for some
$\boldsymbol{\beta} \in \R^{\ell}$, and we prove the following results:
\begin{itemize}
	\item $\frac{{\bf Y}_n - \boldsymbol{\beta}n}{\sqrt{n}}
	=\sqrt{n}\left(\frac{{\bf Y}_n}{n}-\boldsymbol{\beta}\right)$ converges in distribution to a 
	centered Gaussian random variable with a covariance matrix $\Gamma$;
	\item $\{{\bf Y}_n/n\}_{n\geq 1}$ satisfies a large deviation principle with speed $n$, implying the almost sure 
	convergence of ${\bf Y}_n/n$ to $\boldsymbol{\beta}$;
	\item for every positive sequence $\{a_n\}_{n\geq 1}$ such that $a_n\to 0$ and $na_n\to\infty$,\\
    $\{\sqrt{na_n}({\bf Y}_n/n-\boldsymbol{\beta})\}_{n\geq 1}$ satisfies the large deviation principle with speed $1/a_n$ and an appropriate good rate function $J$.
\end{itemize}
The main tools used in this analysis are a quasi-power theorem for the moment 
generating function of ${\bf Y}_n$, derived from properties of the primitive matrices,
and the classical G\"{a}rtner-Ellis Theorem, which well applies in our context.

For the sake of completeness, it is useful to determine when the covariance matrix $\Gamma$, mentioned above, is (strictly) positive definite. In fact, only in this case does the limiting Gaussian random variable have full support on $\R^\ell$  with a positive density. While we generally know that the diagonal elements of $\Gamma$ are strictly positive, this does not guarantee that the matrix is positive definite.

Another natural question that deserves a further study is whether the previous results
can be extended to the cases when the matrix of transition weights is not primitive 
(nor irreducible), but it has two or more (strictly connected) components.
Different limit distributions have been obtained in these cases \cite{dgl04,gl06,glv23}
and one may wonder under which conditions a large deviation principle still holds.

\section{Appendix: some known results}
\label{sec:appendice}

In this section we recall some known results concerning the large deviations, used in the previous sections,
which often are not explicitly mentioned in the literature and may be useful for the reader.
These properties concern in particular the unicity of zeros for rate functions and the convergence to a constant
either in probability or almost sure.

\subsection{Unique zero of a rate function}
\label{app unique point}  
	
\begin{proposition}
If the G\"{a}rtner-Ellis theorem applies (i.e. Theorem \ref{GET}), then the rate function $\Lambda^*$ uniquely
vanishes at ${\bf x}=\nabla \Lambda({\bf 0})$. 
\end{proposition}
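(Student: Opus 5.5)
The plan is to use only convex analysis, applied to the limit function $\Lambda$ under the hypotheses of Theorem \ref{GET}. First I show that $\Lambda^*$ vanishes at ${\bf x}_0:=\nabla\Lambda({\bf 0})$; then I show that it is strictly positive at every other point.

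\textbf{Preliminary facts about $\Lambda$.} For each $n$, the map ${\bf t}\mapsto \frac{1}{v_n}\log\E[e^{v_n{\bf t}\cdot{\bf Z}_n}]$ is convex by Hölder's inequality and equals $0$ at ${\bf t}={\bf 0}$. Passing to the pointwise limit, $\Lambda$ is convex with $\Lambda({\bf 0})=0$. Since ${\bf 0}\in\D_\Lambda^\circ$ and $\Lambda$ is differentiable there, ${\bf x}_0=\nabla\Lambda({\bf 0})$ is well defined.

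\textbf{Step 1: $\Lambda^*$ vanishes at ${\bf x}_0$.} Taking ${\bf t}={\bf 0}$ in the supremum gives $\Lambda^*({\bf x})\geq 0-\Lambda({\bf 0})=0$ for every ${\bf x}$. Convexity and differentiability at ${\bf 0}$ give the supporting-hyperplane inequality
\[
\Lambda({\bf t})\geq \Lambda({\bf 0})+{\bf t}\cdot\nabla\Lambda({\bf 0})={\bf t}\cdot{\bf x}_0
\qquad\text{for all } {\bf t}\in\R^m .
\]
This also holds where $\Lambda=+\infty$. Hence ${\bf t}\cdot{\bf x}_0-\Lambda({\bf t})\leq 0$ for all ${\bf t}$, so $\Lambda^*({\bf x}_0)\leq 0$. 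Together with the lower bound, $\Lambda^*({\bf x}_0)=0$.

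\textbf{Step 2: uniqueness of the zero.} Fix ${\bf x}\neq{\bf x}_0$ and set ${\bf d}={\bf x}-{\bf x}_0\neq{\bf 0}$. Because ${\bf 0}$ is interior, $\Lambda(s{\bf d})$ is finite for small $|s|$. Differentiability at ${\bf 0}$ gives
\[
\Lambda(s{\bf d})=s\,{\bf d}\cdot{\bf x}_0+o(s).
\]
Then
\[
\Lambda^*({\bf x})\geq s{\bf d}\cdot{\bf x}-\Lambda(s{\bf d})=s\|{\bf d}\|^2+o(s),
\]
and the right-hand side is strictly positive for $s>0$ small enough. So $\Lambda^*({\bf x})>0$.

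\textbf{Main obstacle.} The only delicate point is making sure each step uses only what Theorem \ref{GET} actually provides. These are convexity (from Hölder), finiteness near ${\bf 0}$, and differentiability at ${\bf 0}$. Steepness and lower semicontinuity are not needed. In our applications $\Lambda$ is finite and smooth on all of $\R^m$, so the first-order expansion in Step 2 is immediate there.
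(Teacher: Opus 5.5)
Your proof is correct, and it takes a different route from the paper's.

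\textbf{The paper's route.} The paper calls the fact $\Lambda^*(\nabla\Lambda({\bf 0}))=0$ easy and omits it, then argues by contradiction. If $\Lambda^*$ vanished at two points, convexity and nonnegativity would make it vanish on the whole segment joining them, so it would fail to be strictly convex there. Rockafellar's Theorem 26.3 (a closed proper convex function is essentially strictly convex if and only if its conjugate is essentially smooth) would then force $\Lambda$ to fail essential smoothness. That contradicts the hypotheses of Theorem \ref{GET}.

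\textbf{Your route.} You instead give a direct, self-contained argument. The supporting-hyperplane inequality at ${\bf 0}$ produces the zero, and a first-order expansion of $\Lambda$ along ${\bf x}-{\bf x}_0$ gives strict positivity at every other point.

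\textbf{What each buys.} Your version is more elementary and more general. Beyond the convexity that $\Lambda$ inherits from H\"older's inequality, it needs only differentiability of $\Lambda$ at the origin; your Step 2 does not even use convexity. Lower semicontinuity, steepness and the global conjugacy theorem are all unnecessary, and you also supply the step the paper skips.

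The paper's version is shorter on the page and places the fact inside the standard smoothness/strict-convexity duality, but it uses more than is needed. To invoke Theorem 26.3 one needs $\Lambda^{**}=\Lambda$, which uses lower semicontinuity of $\Lambda$. One must also note that the zero set lies in $\mathrm{dom}\,\partial\Lambda^*$; this holds because each zero is a global minimizer, so ${\bf 0}\in\partial\Lambda^*({\bf x})$.

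Both arguments rest on the same fact: the zero set of $\Lambda^*$ is the subdifferential $\partial\Lambda({\bf 0})$, and differentiability reduces it to the singleton $\{\nabla\Lambda({\bf 0})\}$. You verify this by hand, whereas the paper routes it through the global duality theorem.
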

{\bf Proof.}
It is easy to check that the rate function $\Lambda^*$ vanishes at
${\bf x}=\nabla \Lambda({\bf 0})$. Now we reason by contradiction, and
we assume that
$\Lambda^*$ is equal to zero at more than one point. Then, by its
convexity, the
function $\Lambda^*$ would be equal to zero on an interval; thus the
function
$\Lambda^*$ would not be strictly convex. Then, by a standard argument
of convex
analysis (see, e.g., \cite[Th. 26.3]{ro70}), the function $\Lambda$
would not be
differentiable. This is a contradiction for the hypotheses of the
G\"{a}rtner-Ellis,
and the proof is complete.
\fdimo

\subsection{Convergence to the unique zero of a rate function}
\label{app convergenza}

Let  us assume that  $I$ is a \emph{good} rate function on $\mathbb{R}^m$, and define $I(S):=\inf\{I(x):x\in S\}$
for any $S\subseteq \mathbb{R}^m$.

\begin{lemma}
\label{lem:attain}
	Let $S\subseteq \mathbb{R}^m$ be a closed set such that $I(S)<\infty$. 
	Then $I(S)$ is attained at some value $x^*\in S$, i.e., there exists $x^*\in S$ such that $I(x^*)=I(S)$.
\end{lemma}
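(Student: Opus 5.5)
The plan is to use only lower semi-continuity of $I$ together with the compactness of its sublevel sets, which is what ``good'' gives us. Set $c:=I(S)<\infty$. Since $I\geq 0$, we have $c\in[0,\infty)$. Because the infimum is finite, we can choose a minimizing sequence $\{x_k\}_{k\geq 1}\subseteq S$ with $I(x_k)\to c$ and, for instance, $I(x_k)\leq c+1/k$.

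First I would confine the sequence to a compact set. Each $x_k$ lies in the level set $K:=\{x\in\R^m: I(x)\leq c+1\}$, which is compact because $I$ is good. The set $S\cap K$ is then compact, being a closed subset of a compact set, and it is nonempty because it contains every $x_k$.

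For the conclusion there are two routes. One is to invoke the fact recalled in Section \ref{sec:prelim}: a lower semi-continuous function attains its minimum on any nonempty compact set. So $I$ restricted to $S\cap K$ has a minimizer $x^*$. Every point of $S\setminus K$ has $I>c+1$, so $\inf_{S\cap K}I=\inf_S I=c$, and therefore $I(x^*)=c=I(S)$ with $x^*\in S$.

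The other route is more explicit. Extract a subsequence $x_{k_j}\to x^*$. Then $x^*\in S$ because $S$ is closed, and lower semi-continuity gives $I(x^*)\leq\liminf_j I(x_{k_j})=c$. Since $x^*\in S$ we also have $I(x^*)\geq c$, so equality holds.

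There is no real obstacle here. The only point requiring care is the hypothesis $I(S)<\infty$. It makes the minimizing sequence lie in a single bounded-level, hence compact, sublevel set. Without it, for example when $S\neq\emptyset$ but $I\equiv\infty$ on $S$, the statement would hold only trivially, since every point of $S$ would attain the value $\infty$. The assumption also guarantees that $S$ is nonempty, since $\inf\emptyset=\infty$.
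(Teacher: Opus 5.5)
Your proof is correct, and your second route is essentially the paper's argument: you take a minimizing sequence, confine it to the compact set $S\cap\{I\leq c+\varepsilon\}$, extract a convergent subsequence, and conclude by lower semicontinuity (you fix the level $c+1$ and arrange $I(x_k)\leq c+1/k$, while the paper uses that the sequence eventually lies in $K_{S,\varepsilon}$). Your first route only shortens the last step by invoking the attainment of minima of lower semi-continuous functions on nonempty compact sets, a fact the paper recalls in Section \ref{sec:prelim} but does not use in this proof.
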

{\bf Proof.}
	When a (finite) infimum exists, one can always construct a minimizing sequence of points; so let 
	$\{x_n\}$ be a sequence of points in $S$ such that
    $$\lim_{n\to\infty}I(x_n)=I(S).$$
    Then, for any $\varepsilon>0$, the points of the sequence eventually belong to the level set
    $$K_{S,\varepsilon}=\{x:I(x)\leq I(S)+\varepsilon\}$$
    which is compact by assumption (since $I$ is a \emph{good} rate function). Thus, eventually, the points of the sequence 
    belong to $S\cap K_{S,\varepsilon}$, which is compact (being the intersection of a closed set and a compact set).
    Therefore, we can extract a subsequence $\{x_{n_h}\}$ that converges to a point 
    $x^*\in S\cap K_{S,\varepsilon}$ (by compactness properties). We conclude the proof by verifying that  $I(x^*)=I(S)$. Indeed, $x^*\in S$ by construction; moreover, by the lower 
    semicontinuity of $I$, we have
    $$\liminf_{n_h\to\infty}I(x_{n_h})\geq I(x^*),$$
    from which it follows that $I(S)\geq I(x^*)$ (the limit of the subsequence is the same as the limit of the 
    original sequence); finally, since $x^*\in S$, we obtain $I(x^*)=I(S)$.
\fdimo

\begin{corollary}
	Suppose an LDP holds with a good rate function $I$. Then $I$ vanishes at least at one point.
\end{corollary}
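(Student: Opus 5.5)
The corollary follows from Lemma \ref{lem:attain} applied with $S=\mathbb{R}^m$, once I show that the infimum of $I$ over the whole space is $0$.

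First, take $D=\mathbb{R}^m$ in the LDP upper bound \eqref{ldpchiusi}. Since $\mathbb{R}^m$ is closed and $P({\bf X}_n\in\mathbb{R}^m)=1$ for every $n$, we have $\frac{1}{v_n}\log P({\bf X}_n\in\mathbb{R}^m)=0$. Hence
$$0=\limsup_{n\to\infty}\frac{1}{v_n}\log P({\bf X}_n\in\mathbb{R}^m)\leq -\inf_{{\bf x}\in\mathbb{R}^m}I({\bf x}),$$
so $I(\mathbb{R}^m)\leq 0$. A rate function takes values in $[0,\infty]$, so also $I(\mathbb{R}^m)\geq 0$. Therefore $I(\mathbb{R}^m)=0$, which in particular is finite.

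Next, $S=\mathbb{R}^m$ is closed and $I(S)=0<\infty$, so Lemma \ref{lem:attain} gives a point ${\bf x}^*$ with $I({\bf x}^*)=I(\mathbb{R}^m)=0$. This is the required zero.

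The argument has no real obstacle. The only step needing care is that the upper bound must be applied to the whole space, which is allowed because $\mathbb{R}^m$ is closed. The compactness of the level sets is used only through Lemma \ref{lem:attain}.
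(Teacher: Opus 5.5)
Your proof is correct and is essentially the paper's argument. Both apply the upper bound \eqref{ldpchiusi} to the closed set $\mathbb{R}^m$ and use nonnegativity to get $I(\mathbb{R}^m)=0$; then Lemma~\ref{lem:attain} with $S=\mathbb{R}^m$ gives a point where $I$ vanishes.
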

{\bf Proof.}
	Consider the upper bound \eqref{ldpchiusi} from the definition of the LDP;  we get
	$$\limsup_{n\to\infty}\frac{1}{v_n}\log P(X_n\in\mathbb{\R}^m)\leq -I(\mathbb{\R}^m).$$
	Hence $0\leq -I(\mathbb{\R}^m)$, which implies $I(\mathbb{\R}^m)\leq 0$, and since the rate function $I$ is non-negative,
	we get $I(\mathbb{\R}^m)=0$. Therefore, by the previous lemma, there exists at least one point $x^*$ such that $I(x^*)=0$. 
\fdimo

We now present the result on convergence in probability to the unique zero of a good rate function.
\begin{proposition}
	Suppose we have a sequence of random variables $\{X_n\}$, defined on the same probability space 
	$(\Omega,\mathcal{F},P)$, satisfying an LDP with a good rate function $I$, which vanishes at a unique 
	point $x_0$. 
	Then $X_n$ converges in probability to $x_0$.
\end{proposition}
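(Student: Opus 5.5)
Fix $\delta>0$ and put $S_\delta:=\{x\in\mathbb{R}^m:\|x-x_0\|\geq\delta\}$. The plan is to show that $P(X_n\in S_\delta)\to 0$, which is exactly the definition of convergence in probability to $x_0$. Since $S_\delta$ is closed, the upper bound \eqref{ldpchiusi} gives
$$\limsup_{n\to\infty}\frac{1}{v_n}\log P(X_n\in S_\delta)\leq -I(S_\delta).$$
The whole argument therefore reduces to proving $I(S_\delta)>0$.

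To get this, I would argue by cases. If $I(S_\delta)=+\infty$, the upper bound forces $\log P(X_n\in S_\delta)/v_n\to-\infty$, so the probabilities are eventually $0$ or vanish, and we are done. If $I(S_\delta)<\infty$, Lemma \ref{lem:attain} applies because $S_\delta$ is closed and $I$ is good. It yields a point $x^*\in S_\delta$ with $I(x^*)=I(S_\delta)$. Since $x^*\neq x_0$ and $x_0$ is the unique zero of $I$, we get $I(S_\delta)=I(x^*)>0$. The step to watch is this one: without goodness, the infimum over an unbounded closed set could be $0$ without being attained. The lemma exactly removes that obstacle.

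With $c:=I(S_\delta)>0$ (or any finite $0<c<I(S_\delta)$ in the infinite case), the limsup bound gives, for any $0<c'<c$ and all $n$ large enough,
$$P(X_n\in S_\delta)\leq e^{-v_n c'}.$$
Since $v_n\to\infty$, this tends to $0$, and convergence in probability follows.

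For the almost sure statement with $v_n=n$ (point 3 of Section \ref{sec:prelim}), the same bound gives $\sum_n P(\|X_n-x_0\|\geq\delta)\leq C+\sum_n e^{-nc'}<\infty$. Borel--Cantelli then shows that, almost surely, $\|X_n-x_0\|<\delta$ eventually. Taking $\delta=1/k$ and intersecting over $k\in\N_+$ gives $X_n\to x_0$ almost surely.
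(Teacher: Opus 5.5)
Your proof is correct and follows essentially the same route as the paper: apply the LDP upper bound to the closed set $\{x:\|x-x_0\|\geq\delta\}$, dispose of the case $I(S_\delta)=+\infty$ directly, and otherwise use Lemma~\ref{lem:attain} together with uniqueness of the zero to get $I(S_\delta)>0$, which yields exponential decay of $P(X_n\in S_\delta)$. Your Borel--Cantelli extension for $v_n=n$ likewise matches the corollary the paper states immediately after this proposition.
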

{\bf Proof.}
	Let $B_\delta(x_0)$ be the open ball of center $x_0$ and radius $\delta>0$. 
	We consider the upper bound \eqref{ldpchiusi} from 
	the definition of the LDP for the closed set $(B_\delta(x_0))^c$, obtaining
	$$\limsup_{n\to\infty}\frac{1}{v_n}\log P(X_n\in(B_\delta(x_0))^c)\leq -I((B_\delta(x_0))^c).$$
	Then, for every $\eta>0$, eventually we have
	$$P(X_n\in(B_\delta(x_0))^c)\leq \exp(-v_n(I((B_\delta(x_0))^c)-\eta)).$$
	We conclude by showing that $P(X_n\in(B_\delta(x_0))^c)\to 0$. This is trivially true if
	$I((B_\delta(x_0))^c)=\infty$. Otherwise, if $I((B_\delta(x_0))^c)<\infty$, by Lemma \ref{lem:attain} we know there exists 
	$x^*\in(B_\delta(x_0))^c$ such that $I(x^*)=I((B_\delta(x_0))^c)$, and $I(x^*)>0$ (since $x^*\in(B_\delta(x_0))^c$,
	and hence $x^*\neq x_0$, where $x_0$ is the unique zero of $I$). In conclusion, we have $P(X_n\in(B_\delta(x_0))^c)\to 0$
	since we can choose $\eta\in(0,I((B_\delta(x_0))^c))$.
\fdimo
It is worth noticing that, by using the same notation of the proof of the above proposition,
a standard application of Borel-Cantelli lemma implies the following 
\begin{corollary}
 If $\sum_{n\geq 1} \exp(-v_n(I((B_\delta(x_0))^c)-\eta))<\infty$, then $X_n$ converges almost surely to $x_0$. 
\end{corollary}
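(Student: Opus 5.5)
This is the standard Borel–Cantelli argument. The plan is to upgrade the exponential bound from the proof of the previous proposition into almost sure convergence. Let $\delta>0$ and write $I_\delta := I((B_\delta(x_0))^c)$. In that proof we showed $I_\delta>0$: either $I_\delta=\infty$, or Lemma \ref{lem:attain} provides a minimiser $x^*\neq x_0$. Fix $\eta\in(0,I_\delta)$ when $I_\delta<\infty$; when $I_\delta=\infty$, the upper bound holds with any finite constant in place of $I_\delta-\eta$, so we choose one for which the summability hypothesis holds. In both cases the upper bound \eqref{ldpchiusi} gives an index $n_0$ such that
$$P(\|X_n-x_0\|\geq\delta)\leq \exp(-v_n(I_\delta-\eta))\quad\text{for all } n\geq n_0.$$
Under the hypothesis of the corollary, the right-hand side is summable. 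Hence $\sum_n P(\|X_n-x_0\|\geq\delta)<\infty$, because changing finitely many terms does not affect convergence.

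The first Borel–Cantelli lemma then gives $P(\limsup_n\{\|X_n-x_0\|\geq\delta\})=0$. In words, almost surely $\|X_n-x_0\|<\delta$ for all sufficiently large $n$.

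To obtain almost sure convergence, apply this with $\delta=1/k$ for each $k\in\N_+$. This produces countably many null sets, and their union $N$ is still null. For $\omega\notin N$ and every $k$, we have $\|X_n(\omega)-x_0\|<1/k$ eventually, so $X_n(\omega)\to x_0$.

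The one point needing care is that the summability hypothesis must hold for every $\delta>0$ (or at least for the sequence $\delta=1/k$), with a suitable $\eta$ in each case. I read the statement's quantifiers this way.

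In the case $v_n=n$, which is the one used in Theorem \ref{th:largedev}, the hypothesis is automatic. The series is geometric, $\sum_n e^{-n(I_\delta-\eta)}<\infty$, since $I_\delta-\eta>0$. This justifies point 3 of Section \ref{sec:prelim}. No further obstacle is expected.
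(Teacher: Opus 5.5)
Your proof is correct and follows the paper's own route: the paper only says the corollary follows by ``a standard application of Borel--Cantelli lemma'' to the eventual bound $P(X_n\in(B_\delta(x_0))^c)\leq \exp(-v_n(I((B_\delta(x_0))^c)-\eta))$ from the preceding proposition, and you fill in the details (summability of the probabilities, the first Borel--Cantelli lemma, and a countable union over $\delta=1/k$). Your explicit reading of the quantifiers, and your replacement of $I_\delta-\eta$ by a finite constant when $I_\delta=\infty$, are the right way to make the statement precise: under the convention $e^{-\infty}=0$ the hypothesis would be vacuous, and the conclusion can then fail for slowly growing $v_n$, whereas for $v_n=n$ it is automatic.
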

Note that the above corollary trivially holds when $v_n=n$.

Finally, we conclude by observing that if $I$ were not a \emph{good} rate function, we would not be able to reach the same conclusion as in the proposition. To illustrate this, consider the following counterexample:
	let  $I$ of the form
	$$I(x):=\left\{\begin{array}{ll}
		\frac{1}{1+x^2}&\ \text{if}\ x\neq 0\\
		0&\ \text{if}\ x=0.
	\end{array}\right.$$
	It is a rate function (indeed, it is lower semicontinuous) vanishing only at $x_0=0$; however, for any $\delta>0$, 
	we have $I((B_\delta(x_0))^c)=0$.
	
\section*{Funding}
M.G. and M.V. acknowledge the partial support of Universit\ac degli Studi di Milano, under project PSR 2024.
C.M. and E.V. acknowledge the partial support of INdAM-GNAMPA. 
C.M. also acknowledges the partial support of MUR Excellence Department Project awarded to the Department of Mathematics, University of Rome Tor Vergata (CUP E83C23000330006), and of University of Rome Tor Vergata (CUP E83C25000630005) Research Project METRO.

\bibliographystyle{plain}

\end{document}